\def\R{\mathbb{R}}
\def\Q{\mathbb{Q}}
\def\Z{\mathbb{Z}}
\def\M{\mathfrak{M}}
\def\B{\mathfrak{B}}
\def\gam{\gamma}
\def\Gam{\Gamma}
\def\lam{\lambda}
\def\Lam{\Lambda}
\def\1{\mathds{1}}
\def\eps{\varepsilon}
\renewcommand\le{\leqslant}
\renewcommand\ge{\geqslant}
\renewcommand\leq{\leqslant}
\renewcommand\geq{\geqslant}
\renewcommand\hat{\widehat}
\newcommand{\ft}[1]{\widehat #1}
\newcommand\dotprod[2]{\langle #1 , #2 \rangle}
\newcommand\supp{\operatorname{supp}}
\newcommand\spec{\operatorname{spec}}
\newcommand\mes{\operatorname{mes}}
\newcommand\diam{\operatorname{diam}}
\theoremstyle{definition}
\newtheorem*{remark*} {Remark}
\theoremstyle{plain}
\newtheorem{theorem}{Theorem}
\newtheorem{lemma}{Lemma}
\newtheorem{proposition}[lemma]{Proposition}
\newtheorem{corollary}[lemma]{Corollary}
\newtheorem*{corollary*}{Corollary}
\newtheorem*{definition*}{Definition}
\newtheorem{definition}{Definition}
\newcommand{\lemref}[1]{Lemma~\ref{#1}}
\newcommand{\corref}[1]{Corollary~\ref{#1}}
\newcommand{\thmref}[1]{Theorem~\ref{#1}}
\newtheorem*{theorem-m}{Theorem M}
\theoremstyle{definition}
\newenvironment{enumerate-math}
{\begin{enumerate}
\addtolength{\itemsep}{5pt}
}
{\end{enumerate}}
\newenvironment{enumerate-math-abc}
{\begin{enumerate}
\addtolength{\itemsep}{5pt}
}
{\end{enumerate}}
\begin{document}

\title{Quasicrystals and Poisson's summation formula}
\author{Nir Lev}
\address{Department of Mathematics, Bar-Ilan University, Ramat-Gan 52900, Israel}
\email{levnir@math.biu.ac.il}

\author{Alexander Olevskii}
\address{School of Mathematical Sciences, Tel-Aviv University, Tel-Aviv 69978, Israel}
\email{olevskii@post.tau.ac.il}

\thanks{Both authors are partially supported by their respective Israel Science Foundation grants.}

\begin{abstract}
We characterize the measures on $\R$ which have both their support and spectrum uniformly
discrete. A similar result is obtained in $\R^n$ for positive measures.
\end{abstract}


\maketitle


\section{Introduction}

       The subject of this paper is the analysis of measures in $\R^n$
       with discrete support and  spectrum.
       This subject is often  discussed in the framework of 
       so-called Fourier quasi\-crystals, see J.\ C.\ Lagarias' survey  \cite{lag2}
       and the references therein.
       The name ``quasicrystals'' was inspired by an experimental
       discovery in the middle of 80's of non-periodic 
       atomic structures with diffraction patterns consisting
       of spots.

       Sometimes a Fourier quasi\-crystal is defined as a countable
       set $\Lam$  which supports an (infinite) pure point measure $\mu$,
       such that its Fourier transform $\ft\mu$ is also a pure point
       measure, see \cite{dys}.
       This definition is too wide, though, and includes examples where
       the support and spectrum are both everywhere dense sets.
       Usually the support $\Lam$ is assumed to be a uniformly
       discrete set (see e.g.\ \cite{bom}, \cite{cah}).

       The subject goes back to the classical Poisson summation formula:
       if $f$ is a function on $\R$ (satisfying some mild
       smoothness and decay conditions)  and $\ft{f}$ is its Fourier transform,
       then
\[
           \sum_{n\in \Z} f(n)
=           \sum_{n\in \Z} \ft{f}(n).
\]
       In other words, the measure
\[
                \mu=\sum_{n\in \Z} \delta_n
\]
       satisfies the equality
\[
                 \ft \mu = \mu.
\]

        There is also a multi-dimensional version of Poisson's formula.
        Let $L$ be a (full-rank) lattice in $\R^n$, and $L^*$ be the dual
        lattice.   Then
\[
                \big( \sum_{\lam \in L} \delta_\lam \big)\, \ft{\,} \; = \frac1{\det(L)}
                \sum_{s \in L^*} \delta_{s}.
\]
        By simple procedures -- shifts, multiplication on exponentials,
        and taking linear combinations -- one may get different forms
        of  this result.         In particular (for $n=1$) it includes the Cauchy-Ramanujan
formulas and more general ones due to V. Lin (see \cite[pp. 283--289]{gur}).

        However, there are Poisson-type formulas which
        cannot be obtained this way.
        In the one-dimensional case, the problem of which other discrete summation
        formulas may exist  was studied by J.-P.\ Kahane and
        S.\ Mandelbrojt \cite{kah}.

        An interesting example can be found in \cite[p. 265]{gui},  which involves weighted sums of $f$ and $\ft f$ at the nodes
        $\{\pm (n+\frac1{9})^{1/2}\}$ $(n=0,1,2,\dots)$. 
        This  summation
        formula is also deduced from Poisson's one, but in a more tricky way.
          Notice that in  contrast to the classical case, the nodes in this example do not lie in a uniformly discrete set.

        The cut-and-project method, applied to lattices in a generic 
        position, leads to an important class of quasicrystals --  the ``model sets''.  
       Y.\ Meyer \cite{mey1, mey2} discovered fundamental
        connections of these non-periodic sets to harmonic analysis.

           On the other hand, if $\mu$ is the sum of equal atoms
         along a discrete set $\Lam$  and $\ft\mu$ is a positive pure point
         measure, then $\Lam$ is just a lattice.
         A simple proof of this fact was given by A. C\'{o}rdoba \cite{cor1}.
         A more general situation, when the atoms take finitely
         many different values, was considered in \cite[p. 25]{mey0}, \cite{cor2}, \cite{kol}.
         These results 
         are based on the Helson-Cohen
         characterization of idempotent measures in locally compact
         abelian groups.

         There is a conjecture (see e.g.\ \cite[p.\ 79]{lag2}) that if the support and
         spectrum of a measure are both uniformly discrete sets,
         then the measure has a periodic structure,
         and the corresponding summation formula can be obtained from
         Poisson's one by the procedures mentioned above.

         The main goal of this paper is to prove this conjecture.
         In the one-dimensional case this is done in full generality, while in several
         dimensions -- for positive (or positive-definite) measures.
         Our results were outlined in \cite{lo}.

\section{Results}

 A set $\Lam \subset \R^n$ is called uniformly discrete (u.d.) if
\begin{equation}
\label{uddef}
                d(\Lam):=\inf_{\lam,\lam'\in\Lam, \lam\neq\lam'} |\lam-\lam'| > 0.
\end{equation}
          We consider a (complex) measure $\mu$ on $\R^n$ supported on a u.d.\ set $\Lam$:
\begin{equation}
\label{mes1}
              \mu = \sum_{\lam\in \Lam} \mu(\lam) \delta_\lam, \quad
           \mu(\lam) \neq 0, \quad d(\Lam)>0.
\end{equation}
Assume that $\mu$ is  a temperate distribution, and
          that its Fourier transform 
\[
               \ft{\mu}(x) := \sum_{\lam\in \Lam} \mu(\lam) e^{-2\pi i \dotprod{\lam}{x}} 
\]
         (in the sense of distributions)
          is also a measure, supported by a u.d.\ set $S$:
\begin{equation}
\label{mes2}
           \ft{\mu}=  \sum_ {s\in S}  \ft{\mu} (s) \delta_s, \quad
           \ft{\mu}(s) \neq 0,  \quad d(S) >0.   
\end{equation}
          The set $S$ is the spectrum of the measure $\mu$.

\begin{theorem}\label{thm1}
Let $\mu$ be a measure on $\R$ satisfying \eqref{mes1} and \eqref{mes2}.
                        Then the support $\Lam$ is contained in a finite union
                        of translates of a certain lattice.
                        The same is true for $S$ (with the dual lattice).
\end{theorem}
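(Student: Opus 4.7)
The plan is to link the two uniform discreteness conditions through a single convolution identity and then extract periodicity.

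First, I would set up the main technical tool. Since $\mu$ is a tempered distribution supported on a u.d.\ set, testing against translates of a fixed bump yields polynomial growth $|\mu(\lam)|\le C(1+|\lam|)^N$, and by symmetry $|\ft\mu(s)|\le C(1+|s|)^N$. Then, for any Schwartz $\varphi$ whose transform $\ft\varphi$ is compactly supported in an interval $(-R,R)$, the convolution reads
\[
(\mu * \varphi)(x) \;=\; \sum_{s \in S \cap (-R,R)} \ft\mu(s)\, \ft\varphi(s)\, e^{2\pi i s x},
\]
which is an honest trigonometric polynomial: $d(S)>0$ makes the sum finite, and the growth bound handles convergence. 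Taking $\varphi$ sharply peaked at $0$ on the scale $d(\Lam)$ lets one recover each coefficient $\mu(\lam)$ up to controlled error from the values of this polynomial at points of $\Lam$. This identity is the only bridge between the spatial and spectral sides of the problem, and it will drive everything.

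Next, I would upgrade $\Lam$ to a \emph{Meyer set}, i.e.\ show that $\Lam - \Lam$ is itself uniformly discrete. The idea is that if $\Lam-\Lam$ had accumulation points, one could find pairs of distinct translates $\tau_t\mu$ whose local patterns on $\Lam$ agree to high precision. But the trigonometric-polynomial representation above forces these translates to differ only by modulating a \emph{finite} frequency set from $S$; combined with the u.d.\ property of $\Lam$ this is incompatible unless the accumulation is spurious. Since the hypotheses are symmetric under the Fourier transform, the same reasoning gives that $S$ is Meyer as well.

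The last and hardest step is to upgrade ``Meyer set'' to ``contained in a finite union of translates of a lattice''. By Meyer's structure theorem, $\Lam$ sits inside a finite union of translates of a model set obtained by projecting to $\R$ a lattice $\tilde L \subset \R \times H$ through a window in an internal space $H$. The spectrum of the associated crystalline measure on the model set is itself the projection to $\R$ of the dual lattice $\tilde L^{*}$, and for any nontrivial $H$ this projection is dense in $\R$, contradicting $d(S)>0$. Hence $H$ must be trivial, $\tilde L$ is an honest lattice in $\R$, and $\Lam$ is contained in a finite union of its translates; the conclusion for $S$ follows by applying the whole argument to $\ft\mu$. The principal obstacle is making this last rigidity statement quantitative enough to deduce, from the mere uniform discreteness of $S$, that the internal space of the model set containing $\Lam$ is forced to be trivial.
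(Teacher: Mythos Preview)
Your outline identifies the right endgame (Meyer $\Rightarrow$ model set $\Rightarrow$ force the internal space to be trivial), but the argument has a genuine gap exactly where you flag the ``principal obstacle''. The spectrum $S$ is the spectrum of $\mu$, not of the canonical crystalline measure on the ambient model set; there is no a priori reason why the dense projection $p_1(\tilde L^{*})$ should have anything to do with $S$, since $\mu$ is an arbitrary measure supported on a \emph{subset} of the model set with arbitrary weights. So ``$p_1(\tilde L^{*})$ dense contradicts $d(S)>0$'' is not a valid deduction, and the rigidity step is missing its engine. Your step~2 (showing $\Lam-\Lam$ is u.d.) is also only a heuristic as written.

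The paper closes both gaps with one construction you do not have: for each $h\in\Lam-\Lam$ it forms the ``autocorrelation'' measure
\[
\mu_h=\sum_{\lam\in\Lam_h}\mu(\lam)\,\overline{\mu(\lam+h)}\,\delta_\lam,\qquad \Lam_h:=\Lam\cap(\Lam-h),
\]
and shows (by a product/convolution computation) that $\spec(\mu_h)\subset \overline{S-S}$, so $\ft\mu_h$ vanishes on $(0,a)$ with $a=d(S)$. A one-dimensional spectral-gap lemma then gives a \emph{uniform} lower bound $D_\#(\Lam_h)\ge c>0$ for all $h\in\Lam-\Lam$. This single bound does two jobs: (i) by a counting argument it forces $D^{+}(\Lam-\Lam)<\infty$, which together with relative density makes $\Lam$ Meyer (your step~2 made precise); and (ii) inside the model-set description, a parallelogram-law argument shows that if the internal space $H$ were nontrivial one could choose $h$ with $\diam\{p_2(\gamma(\lam)):\lam\in\Lam_h\}$ arbitrarily small, hence $D^{+}(\Lam_h)$ arbitrarily small, contradicting the uniform bound. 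That is the quantitative mechanism that collapses $H$; the density of $p_1(\tilde L^{*})$ plays no role.
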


\begin{theorem}\label{thm2}
Let $\mu$ be a positive measure on $\R^n$, $n>1$, satisfying \eqref{mes1} and \eqref{mes2}.
 Then the conclusion of Theorem \ref{thm1} holds.
\end{theorem}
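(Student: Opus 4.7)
My plan is to reduce Theorem~\ref{thm2} to the one-dimensional Theorem~\ref{thm1} by constructing, from $\mu$, a family of one-dimensional positive measures whose supports and spectra are uniformly discrete. Positivity of $\mu$ plays a decisive role because it prevents cancellation when several atoms of $\mu$ project to nearby points on a line. A preliminary observation is that for positive $\mu$ with $\hat\mu$ a measure, both $\mu$ and $\hat\mu$ are translation bounded, so the atom values $\mu(\lam)$ and $|\hat\mu(s)|$ are uniformly bounded.

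Fix a unit vector $e\in\R^n$ and a Schwartz bump $\phi$ on $e^\perp$ with $\hat\phi$ compactly supported, and form the positive one-dimensional measure
\[
\nu_e \;=\; \sum_{\lam\in\Lam}\mu(\lam)\,\phi\bigl(\lam-\langle\lam,e\rangle e\bigr)\,\delta_{\langle\lam,e\rangle},
\]
which is the pushforward under $x\mapsto\langle x,e\rangle$ of the $n$-dimensional measure $\phi(\pi_{e^\perp}\cdot)\cdot\mu$. A computation on the Fourier side (convolve $\hat\mu$ with the Fourier transform of $\phi\circ\pi_{e^\perp}$, a smooth measure supported on $e^\perp$, then restrict to $\R e$) yields
\[
\hat\nu_e \;=\; \sum_{s\in S}\hat\mu(s)\,\hat\phi(-\pi_{e^\perp}s)\,\delta_{\langle s,e\rangle};
\]
compact support of $\hat\phi$ restricts the effective sum to $s\in S$ lying in a bounded slab around $\R e$, hence to a locally finite set.

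The next step is to argue that for a suitable direction $e$ (chosen carefully with respect to $\Lam$ and $S$), both $\supp\nu_e$ and $\supp\hat\nu_e$ are uniformly discrete in $\R$, so that Theorem~\ref{thm1} applies to $\nu_e$ and forces its support into a finite union of translates of a 1-dimensional lattice. Varying $e$ over enough directions (say $n$ linearly independent ones, plus auxiliary directions to pin down cross terms) and combining the 1-D lattice constraints on $\langle\lam,e\rangle$ for those $\lam$ lying in the corresponding tube should allow one to recover a full-rank lattice structure for $\Lam$. The dual statement for $S$ follows by swapping the roles of $\mu$ and $\hat\mu$.

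The main obstacle is producing a direction $e$ (and bump $\phi$) for which \emph{both} $\supp\nu_e$ and $\supp\hat\nu_e$ are uniformly discrete, since generically the projection of a u.d.\ set in $\R^n$ onto a line is only locally finite, not u.d. Positivity is indispensable here: it guarantees that atoms of $\nu_e$ at distinct nearby points are genuine positive atoms rather than cancelling contributions, so uniform discreteness of $\supp\nu_e$ is equivalent to a purely geometric condition on $\Lam$. The argument must combine this with the uniform discreteness of $S$, which prevents $\hat\nu_e$ from having accumulations of frequencies for a generic $e$, most likely through a Baire-category or measure-theoretic argument over the sphere of possible directions. This is where the technical core of the proof will lie.
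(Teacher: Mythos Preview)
Your proposal is not a proof but a program, and the missing step is not a technicality. You correctly identify the ``main obstacle'': finding a direction $e$ (and a bump $\phi$) for which \emph{both} $\supp\nu_e$ and $\supp\hat\nu_e$ are uniformly discrete in $\R$. You offer no argument for this, only a hope that Baire category or a measure-theoretic argument on the sphere will supply one. But there is no a priori reason such a direction exists. For a generic u.d.\ set $\Lam\subset\R^n$, the projection onto a line of $\Lam$ intersected with a tube need not be u.d.\ for \emph{any} direction; the desired conclusion (that $\Lam$ lies in finitely many translates of a lattice) is precisely what would guarantee good directions, so you are in danger of assuming what you want to prove. Moreover, you need the \emph{same} $e$ to be good for the tube projection of $\Lam$ and for the tube projection of $S$ simultaneously, which compounds the difficulty. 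Your invocation of positivity (to prevent cancellation in $\nu_e$) is correct as far as it goes, but it does not address the geometric obstruction at all.

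Even granting that step, a second gap remains: applying Theorem~\ref{thm1} to $\nu_e$ constrains only those $\lam$ lying in a single tube of bounded width, and the lattice and cosets you obtain may depend on the tube. Covering $\R^n$ by parallel tubes gives infinitely many one-dimensional constraints with no mechanism to force a common lattice; varying $e$ over $n$ independent directions does not obviously give a full-rank lattice for $\Lam$ either, since the constraints apply to disjoint subsets of $\Lam$. The paper avoids all of this by a completely different route: it introduces the autocorrelation-type measures $\mu_h=\sum_{\lam\in\Lam_h}\mu(\lam)\overline{\mu(\lam+h)}\,\delta_\lam$ supported on $\Lam_h=\Lam\cap(\Lam-h)$, shows each $\mu_h$ has a spectral gap $B_a\setminus\{0\}$ with $a=d(S)$, uses positivity of $\mu$ to ensure $\mu_h$ is positive and hence $0\in\spec(\mu_h)$ (an isolated atom), and then applies a sampling/Landau density argument to obtain $D^-(\Lam_h)\ge c>0$ uniformly in $h$. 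From this it follows that $D^+(\Lam-\Lam)<\infty$, hence $\Lam$ is a Meyer set, and a separate structural lemma on Meyer sets with uniformly dense difference slices forces $\Lam$ into finitely many translates of a lattice. The role of positivity in the paper is thus to produce an isolated spectral atom at the origin for every $\mu_h$, which is quite different from your use of it.
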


 The following proposition completes the results,
            describing the explicit form of $\mu$.

\begin{theorem}\label{thm3}
Let $\mu$ be a measure in $\R^n$, $n\geq 1$, satisfying \eqref{mes1} and \eqref{mes2}, and
such that $\Lam$ is contained in a finite union of translates of a lattice $L$. 
Then $\mu$ is of the form
\begin{equation}
\label{mes3}
\mu=\sum_{j=1}^N P_j \sum_{\lam \in L + \theta_j} \delta_\lam
\end{equation}
where  $\theta_j$ is a  vector in $\R^n$,
and  $P_j(x)$ is a trigonometric polynomial $(1\leq j\leq N)$.
\end{theorem}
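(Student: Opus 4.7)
The strategy is to decompose $\mu$ along cosets of $L$ and isolate one summand at a time via Fourier interpolation. After merging coincident cosets I may assume $\theta_1, \dots, \theta_N$ are pairwise distinct modulo $L$, and I write $\mu = \sum_{j=1}^N \mu_j$ with $\mu_j$ the restriction of $\mu$ to $\Lam \cap (L + \theta_j)$. The goal is then to show that each $\mu_j$ has the form $P_j \cdot \sum_{\lam \in L + \theta_j} \delta_\lam$ for a trigonometric polynomial $P_j$.

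To isolate one summand $\mu_{j_0}$, I use that for every $s \in L^*$ and every $\lam \in L + \theta_j$ the exponential $e^{2\pi i \langle s, \lam\rangle}$ equals the constant $e^{2\pi i \langle s, \theta_j\rangle}$, so
\[
e^{2\pi i \langle s, \cdot\rangle}\, \mu \;=\; \sum_{j=1}^N e^{2\pi i \langle s, \theta_j\rangle}\, \mu_j.
\]
Since the $\theta_j$ are distinct modulo $L$ they induce distinct characters on $L^*$, and Dedekind's linear independence of characters (a finite-dimensional Vandermonde argument) supplies a finitely supported $\Phi \colon L^* \to \C$ with $\sum_s \Phi(s) e^{2\pi i \langle s, \theta_j\rangle} = \delta_{j, j_0}$. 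Summing then gives $\mu_{j_0} = \sum_{s \in \supp\Phi} \Phi(s)\, e^{2\pi i \langle s, \cdot\rangle}\, \mu$, and on the Fourier side
\[
\ft{\mu_{j_0}}(x) \;=\; \sum_{s \in \supp \Phi} \Phi(s)\, \ft\mu(x - s),
\]
which is manifestly a measure supported in the finite union $\bigcup_{s \in \supp \Phi}(S + s)$.

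Next I translate $\mu_{j_0}$ onto the lattice: with $\nu(\cdot) := \mu_{j_0}(\cdot + \theta_{j_0})$, supported on a subset of $L$, the transform $\ft \nu$ is $L^*$-periodic and still has support in $\bigcup_{s \in \supp\Phi}(S + s)$. Intersecting with a fundamental domain $D$ of $L^*$ gives a finite set (a finite union of finite sets, by uniform discreteness of $S$), so $\supp \ft\nu$ is contained in $\bigcup_{k=1}^K (L^* + s_k^{(0)})$ for finitely many $s_k^{(0)} \in D$. Combined with $L^*$-periodicity this forces $\ft\nu = \sum_{k=1}^K c_k \sum_{\sigma \in L^* + s_k^{(0)}} \delta_\sigma$ for some constants $c_k \in \C$. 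Inverting via Poisson's formula produces $\nu = \tilde P_{j_0}(x) \cdot \sum_{\lam \in L} \delta_\lam$ with $\tilde P_{j_0}(x) := \det(L) \sum_k c_k e^{2\pi i \langle s_k^{(0)}, x\rangle}$ a trigonometric polynomial; translating back yields $\mu_{j_0} = P_{j_0}(x) \cdot \sum_{\lam \in L + \theta_{j_0}} \delta_\lam$ with $P_{j_0}(x) := \tilde P_{j_0}(x - \theta_{j_0})$.

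The main obstacle I foresee is the isolation step: constructing the finitely supported $\Phi$ and confirming that the identity $\mu_{j_0} = \sum_s \Phi(s) e^{2\pi i\langle s,\cdot\rangle}\, \mu$ holds as tempered distributions (so that Fourier transforming term by term is legitimate and produces the finite sum displayed above). Once $\ft{\mu_{j_0}}$ has been exhibited as a finite combination of translates of the u.d.\ measure $\ft\mu$, the remaining steps (translation onto $L$, exploiting $L^*$-periodicity, and Poisson inversion) become essentially mechanical.
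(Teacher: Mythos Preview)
Your argument is correct and follows the same overall architecture as the paper's proof: decompose $\mu$ along cosets of $L$, isolate each coset piece $\mu_{j_0}$ by exhibiting it as a finite linear combination of exponential modulations of $\mu$ (so that $\ft{\mu_{j_0}}$ is a finite sum of translates of $\ft\mu$), and then use $L^*$-periodicity plus uniform discreteness of $S$ to force $\ft{\mu_{j_0}}$ to be a finite sum of shifted Dirac combs.

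The one genuine difference is in the isolation step. The paper first passes, via \corref{corollarylat}, to a finer lattice $L'\supset L$ with $L'\cap\Z[F']=\{0\}$; this guarantees that (after rescaling to $\Z^n$) the differences $\theta_j-\theta_\ell$ are irrational, and \corref{corollary25} then produces a single vector $m\in\Z^n$ along which the values $e^{2\pi i\langle\theta_j,m\rangle}$ are pairwise distinct, so that the translates $\ft\mu(t-pm)$, $p=0,\dots,s-1$, give an invertible Vandermonde system. At the end the paper must unwind the passage from $L$ to $L'$. You replace all of this by the soft observation that the $\theta_j$, being distinct modulo $L$, define distinct characters of $L^*$, and then Dedekind's linear independence of characters (equivalently, the existence of some invertible $N\times N$ minor of the matrix $(e^{2\pi i\langle s,\theta_j\rangle})_{s\in L^*,\,j}$) supplies the finitely supported $\Phi$ directly. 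This is a bit cleaner: it avoids the auxiliary Lemmas on lattice enlargement and on avoiding rational hyperplanes, and it delivers the representation \eqref{mes3} with the original lattice $L$ without any back-and-forth. The paper's route, on the other hand, is more explicit, identifying a concrete set $\{0,m,2m,\dots,(s-1)m\}$ of modulation frequencies that does the job.
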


By a trigonometric polynomial $P(x)$ on $\R^n$  we mean a finite linear combination
of exponentials $\exp{2\pi i \dotprod{\omega}{x}}$. 

The conclusion of \thmref{thm3} shows that $\mu$ can be obtained from
the measure  $\sum_{\lam \in L} \delta_\lam$ in       Poisson's summation formula
by a finite number of shifts, multiplication on exponentials,
        and taking linear combinations.

Conversely, one can easily see that every measure $\mu$ of the form \eqref{mes3} 
satisfies both \eqref{mes1} and \eqref{mes2}, since $\ft\mu$ is of the same form
(with the dual lattice).


\section{Preliminaries}

\subsection{Notation}
By
$\dotprod{\cdot}{\cdot}$ and $|\cdot|$ we  denote the Euclidean 
scalar product and norm in $\R^n$. The open ball of radius $r$ centered at the origin is denoted
 $B_r := \{x \in \R^n: |x|<r \}$.

A set $\Lambda \subset \R^n$ is uniformly discrete (u.d.) if it satisfies \eqref{uddef}.
The set $\Lambda$ is relatively dense if there is $R > 0$ such that every ball of
radius $R$ intersects $\Lam$. 

By a ``distribution'' we shall  mean a temperate distribution on $\R^n$ (see \cite{rud}).
By a  ``measure'' we mean a complex, locally finite measure (usually infinite) which is
also a temperate distribution. 
As usual $\delta_\lam$   is the Dirac measure at the point $\lambda$.

If $\alpha$ is a temperate distribution, and $\varphi$ is a Schwartz function on $\R^n$,
then $\dotprod{\alpha}{\varphi}$ will denote the action of $\alpha$ on ${\varphi}$.

The Fourier transform in $\R^n$ will be normalized as follows:
$$\ft \varphi (t)=\int_{\R^n} \varphi (x) \, e^{-2\pi i\langle t,x\rangle} dx.$$
If $\alpha$ is a temperate distribution then its Fourier transform $\ft\alpha$ is defined by 
$\dotprod{\ft\alpha}{\varphi} = \dotprod{\alpha}{\ft\varphi}$.

We denote by $\supp(\alpha)$ the support of the distribution $\alpha$, and by
$\spec (\alpha)$ the support of its Fourier transform $\ft\alpha$.

By  a (full-rank) lattice $L \subset \R^n$ we mean the image of $\Z^n$ under some
invertible linear transformation $T$. The determinant $\det(L)$ is equal to $|\det (T)|$.
The dual lattice $L^*$ is the set of all vectors $\lambda^*$ such that $\dotprod{\lambda}{\lambda^*}
 \in \Z$, $\lambda \in L$.

If $A$ is a set in $\R^n$ then $\# A$ is the number of elements in $A$,
$\mes(A)$  or $|A|$  denote the Lebesgue measure of $A$, $\diam(A)$
is the diameter of $A$, and $\1_A$ is the indicator function of $A$.
By $A+B$ and $A-B$ we denote the set of sums and set of differences of two sets
$A,B$ in $\R^n$.

\subsection{Measures}
We  will need a few simple facts about measures in $\R^n$.

\begin{lemma}\label{lemma14}
Let $\mu$ be a measure in $\mathbb R^n$ supported by a u.d.\ set $\Lambda.$ Then
$\mu$ is a temperate distribution if and only if
$$|\mu(\lambda)|\le C(1+|\lambda|^N),\quad \lambda\in\Lambda,$$
for some positive constants  $C$ and $N$.
\end{lemma}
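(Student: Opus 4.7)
The plan is to prove the two directions separately; both are relatively direct once we exploit the uniform discreteness of $\Lambda$.

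For the ``if'' direction, assume $|\mu(\lam)| \le C(1+|\lam|)^N$ for $\lam \in \Lam$. For any Schwartz function $\varphi$ and any integer $M > N+n$, I would write
\[
|\dotprod{\mu}{\varphi}| \le \sum_{\lam \in \Lam} |\mu(\lam)| \, |\varphi(\lam)|
\le C \sup_{x \in \R^n} (1+|x|)^{M} |\varphi(x)| \cdot \sum_{\lam \in \Lam} (1+|\lam|)^{N-M}.
\]
The final sum converges because the uniform discreteness of $\Lam$ forces $\#(\Lam \cap B_r) = O(r^n)$, and $M - N > n$. The right-hand side is controlled by a Schwartz seminorm of $\varphi$, so $\mu$ is tempered.

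For the ``only if'' direction, the key idea is to construct a single Schwartz function that isolates one coefficient of $\mu$ at a time. Let $d = d(\Lam) > 0$, and fix once and for all a smooth bump $\psi$ with $\supp \psi \subset B_{d/2}$ and $\psi(0) = 1$. For each $\lam_0 \in \Lam$, the translate $\psi_{\lam_0}(x) := \psi(x - \lam_0)$ vanishes on $\Lam \setminus \{\lam_0\}$, so $\dotprod{\mu}{\psi_{\lam_0}} = \mu(\lam_0)$.

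Since $\mu$ is a tempered distribution, there exist $C_0$ and an integer $M$ such that, for every Schwartz $\varphi$,
\[
|\dotprod{\mu}{\varphi}| \le C_0 \sum_{|\alpha|,|\beta| \le M} \sup_{x\in\R^n} |x^\alpha \, \partial^\beta \varphi(x)|.
\]
Applying this with $\varphi = \psi_{\lam_0}$ and using that $\partial^\beta \psi_{\lam_0}$ is supported in the ball $B_{d/2}(\lam_0)$, on which $|x| \le |\lam_0| + d/2$, one obtains
\[
|\mu(\lam_0)| \le C_0 \sum_{|\alpha|,|\beta| \le M} (|\lam_0| + d/2)^{|\alpha|} \|\partial^\beta \psi\|_\infty \le C (1+|\lam_0|)^M,
\]
which is the desired polynomial bound.

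I don't expect any real obstacle here; the only mildly delicate point is choosing $\psi$ with support small enough (radius less than $d(\Lam)$) to guarantee that $\psi_{\lam_0}$ sees only the single atom at $\lam_0$. This is exactly where uniform discreteness is used.
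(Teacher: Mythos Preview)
Your argument is correct in both directions and is exactly the ``standard argument'' the paper alludes to; the paper does not actually supply a proof of this lemma, merely stating that it ``can be proved using standard arguments.'' Your use of a bump function supported in $B_{d/2}$ to isolate a single atom, together with the seminorm estimate for temperate distributions, is precisely the expected approach.
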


This  can be  proved using standard arguments.

\begin{lemma}\label{lemma15}
Let $\mu$ be a measure in $\mathbb R^n$ satisfying \eqref{mes1} and \eqref{mes2}. Then
\begin{equation}
\label{eqtrbdd}
\sup_{\lam \in \Lam} |\mu(\lambda)| < \infty.
\end{equation}
\end{lemma}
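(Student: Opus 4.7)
The plan is to represent each coefficient $\mu(\lam_0)$ as the pairing of $\mu$ with a bump function localized at $\lam_0$, pass to the Fourier side via the distributional Parseval identity, and then exploit the rapid decay of a Schwartz function against the polynomially growing values of $\hat\mu$ on the u.d.\ set $S$. This will produce a bound on $|\mu(\lam_0)|$ that is visibly independent of $\lam_0$.

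First, I fix a function $\varphi \in C_c^\infty(\R^n)$ with $\varphi(0)=1$ and $\supp\varphi \subset B_r$ for some $r$ with $r < d(\Lam)$. For any $\lam_0 \in \Lam$ the translate $\varphi(\cdot - \lam_0)$ is supported in a ball of $\Lam$-diameter less than $d(\Lam)$, so it meets $\Lam$ only at $\lam_0$, and hence
$$\mu(\lam_0) = \langle \mu, \varphi(\cdot - \lam_0)\rangle.$$
Next I transfer this to the spectral side. A direct computation gives $\widehat{\varphi(\cdot - \lam_0)}(\xi) = e^{-2\pi i \dotprod{\lam_0}{\xi}}\hat\varphi(\xi)$; writing $\varphi(\cdot - \lam_0) = \hat g$ with the Schwartz function $g(\xi):= e^{2\pi i \dotprod{\lam_0}{\xi}}\hat\varphi(-\xi)$ and invoking the defining identity $\langle \mu, \hat g\rangle = \langle \hat\mu, g\rangle$ together with $\hat\mu = \sum_{s \in S}\hat\mu(s)\delta_s$, I get
$$\mu(\lam_0) = \sum_{s \in S} \hat\mu(s)\, e^{2\pi i \dotprod{\lam_0}{s}}\, \hat\varphi(-s).$$

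Finally, I would bound this sum uniformly in $\lam_0$. By \lemref{lemma14} applied to the temperate distribution $\hat\mu$, whose support $S$ is u.d., there exist constants $C$ and $N$ with $|\hat\mu(s)| \leq C(1+|s|^N)$. Since $\hat\varphi$ is Schwartz (hence decays faster than any polynomial) and $S\cap B_R$ has $O(R^n)$ elements by uniform discreteness of $S$, the series
$$\sum_{s\in S} |\hat\mu(s)|\cdot |\hat\varphi(-s)|$$
converges, and its value is the desired bound. The only slightly delicate point is justifying that the distributional pairing $\langle \hat\mu, g\rangle$ indeed equals the absolutely convergent sum $\sum_s \hat\mu(s) g(s)$; this is a standard consequence of the same polynomial growth of $|\hat\mu(s)|$ combined with the Schwartz decay of $g$, so the calculation is self-consistent and no deeper ingredient is required.
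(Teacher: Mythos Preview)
Your proof is correct and follows essentially the same approach as the paper: isolate $\mu(\lam_0)$ by testing $\mu$ against a bump localized at $\lam_0$, pass to the Fourier side, and bound the resulting sum over $S$ using \lemref{lemma14} together with Schwartz decay. The only cosmetic difference is that the paper takes the compactly supported bump to be $\hat\varphi$ (so that the function paired with $\hat\mu$ is $\varphi$ itself), whereas you take the bump to be $\varphi$ and pair $\hat\mu$ with the Schwartz function $g$; the two arguments are interchangeable.
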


\begin{proof}
Fix a Schwartz function $\varphi$ such that $\hat\varphi(0)=1$ and $\supp(\ft\varphi)\subset B_\delta$, where $\delta:=d(\Lam)>0$. Then

\begin{equation}
\label{eqmusud}
|\mu(\lambda)|=\Big| \int \hat\varphi(x-\lambda)\, d\mu(x)  \Big|
=\Big| \int  \varphi(t) \,e^{2\pi i\langle \lambda,t\rangle}\,  d\hat\mu(t)\Big|
\le \sum_{s\in S}|\varphi(s)|\,|\hat\mu(s)|.
\end{equation}
By Lemma \ref{lemma14} there are constants $C,N$ such that
$|\hat\mu(s)|\le C(1+|s|^N)$. Thus the sum on the right-hand
 side of \eqref{eqmusud}  converges, and this establishes \eqref{eqtrbdd}.
\end{proof}

\begin{lemma}\label{lemma19}
Let $\mu$ be a non-zero, positive measure in $\mathbb R^n.$ Then $0\in\spec(\mu).$
\end{lemma}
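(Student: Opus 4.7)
The approach is to argue by contradiction. Suppose $0 \notin \spec(\mu) = \supp(\hat\mu)$. Then there exists $\eps > 0$ such that $\hat\mu$ vanishes on the ball $B_\eps$ as a distribution, meaning $\dotprod{\hat\mu}{\varphi} = 0$ for every $\varphi \in C_c^\infty(B_\eps)$. The duality $\dotprod{\mu}{g} = \dotprod{\hat\mu}{\check g}$, where $\check g(\xi) := \hat g(-\xi)$ is the inverse Fourier transform of $g$, reduces the task to producing a nonnegative Schwartz function $g$ with $\int g\,d\mu > 0$ whose inverse Fourier transform is compactly supported in $B_\eps$. Indeed, positivity of $\mu$ and of $g$ would force $\dotprod{\mu}{g} > 0$, while the hypothesis on $\hat\mu$ would force $\dotprod{\hat\mu}{\check g} = 0$, contradicting the identity.

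To build such a $g$, I would fix any nonzero Schwartz function $f$ with $\hat f$ smooth and supported in $B_{\eps/4}$, and then set $g := |f|^2$. Then $g \ge 0$, $g$ is Schwartz, and $\hat g = \hat f * \widehat{\bar f}$ is supported in $B_{\eps/2} \subset B_\eps$; the same holds for $\check g$. Moreover, by Paley--Wiener, $f$ extends to an entire function on $\C^n$, so its zero set has empty interior and $g$ is strictly positive on a dense open set.

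To secure $\int g\, d\mu > 0$, I would translate: pick any $x_0 \in \supp(\mu)$ (nonempty since $\mu \neq 0$), choose $a \in \R^n$ with $g(x_0 - a) > 0$, and replace $g$ by $g_a(x) := g(x - a)$. Continuity of $g_a$ near $x_0$, combined with positivity of $\mu$ and the fact that $x_0 \in \supp(\mu)$, gives $\int g_a\, d\mu > 0$; the translation merely multiplies $\hat g$ by the unimodular factor $e^{-2\pi i \dotprod{a}{\cdot}}$, so $\check{g_a}$ remains supported in $B_{\eps/2}$. I do not anticipate any substantial obstacle: the argument is a soft-analysis check, and the only point meriting care is that the pairing $\dotprod{\mu}{g_a}$ really equals $\int g_a\, d\mu$ and is computed via $\hat\mu$ by the distribution identity -- both of which are standard for tempered positive measures against Schwartz functions.
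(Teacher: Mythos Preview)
Your proof is correct and follows essentially the same approach as the paper: argue by contradiction and pair $\mu$ against a nonnegative Schwartz function whose (inverse) Fourier transform is supported in the spectral gap. The paper simply asserts the existence of a Schwartz $\varphi$ with $\supp(\varphi)\subset B_\delta$ and $\hat\varphi>0$ everywhere (so no translation step is needed), while you build $g=|f|^2\geq 0$ explicitly and translate to force a positive pairing; both routes yield the same contradiction.
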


\begin{proof}
If not, there is $\delta>0$ such that the support of the distribution $\ft \mu$ is disjoint from $B_\delta$.
Choose a Schwartz function $\varphi$ such that $\supp(\varphi)\subset B_\delta$ and $\hat\varphi>0.$
Then
$$\int \hat\varphi \, d\mu=\langle \hat\mu,\varphi\rangle=0.$$
Hence $\hat\varphi\mu$ is a non-zero positive measure with zero total mass, a contradiction.
\end{proof}

\subsection{Densities}

We will use the classical concepts of lower and upper uniform
           density of a  set $\Lam$. The first one plays a central
           role in Beurling's sampling theory for entire functions
           of exponential type.  The second one was used by Kahane 
           and Beurling  in the interpolation problem.  
           Here are their definitions:
\begin{align*}
D^-(\Lambda) & := \liminf_{R\to\infty} \,\inf_{x\in\R^n} \,\frac{\#(\Lambda\cap (x+B_R))}{|B_R|},\\[8pt]
D^+(\Lambda)& :=\limsup_{R\to\infty} \,\sup_{x\in\R^n} \,\frac{\#(\Lambda\cap (x+B_R))}{|B_R|}.
\end{align*}

 We also need  the following version of density:
\[
D_\#(\Lambda) :=\liminf_{R\to\infty}\frac{\#(\Lambda\cap B_R)}{|B_R|}.
\]

Clearly we have $D^-(\Lam) \le D_\#(\Lam) \le D^+(\Lam)$.

            Notice that if $\Lam$ is a u.d.\ set then the densities above are
            finite, and that their values are  invariant under translation of $\Lam$.
 The last claim is obvious for $D^-$ and $D^+$, and is easy to check for $D_\#$.

\subsection{Sampling and interpolation}
Let $\Omega$ be a compact set in $\R^n$, whose boundary has Lebesgue measure zero. 
We denote by $\B(\Omega)$ the   Bernstein space
consisting of all bounded, continuous functions $f$ on $\R^n$ such that the distribution $\ft f$
is supported by $\Omega$. 

Let  $\Lam$ be a u.d.\ set in $\R^n$. One says that
\begin{enumerate-math}
\item
$\Lam$ is a \emph{sampling set} for $ \B(\Omega)$ if 
there is a constant $C = C(\Lam, \Omega)$ such that
$$\sup_{x \in \R^n} |f(x)|\le C \sup_{\lambda\in\Lambda}|f(\lambda)|, \quad f \in \B(\Omega);$$
\item
 $\Lam$ is an \emph{interpolation set} for $\B(\Omega)$ if for any bounded sequence
of complex numbers $\{c_\lam\}_{\lam\in\Lam}$, there exists some $f \in \B(\Omega)$ satisfying
$f(\lam)=c_\lam$ $(\lam\in\Lam)$.
\end{enumerate-math}

Landau proved in \cite{lan} that the classical
            density conditions for sampling and interpolation remain to be necessary
            in the more   general situation:
\emph{
\begin{enumerate-math}
\item
If $\Lam$ is a {sampling set} for $ \B(\Omega)$, then $D^-(\Lam) \geq \mes (\Omega)$;
\item
If $\Lam$ is an {interpolation set} for $\B(\Omega)$, then $D^+(\Lam) \leq \mes (\Omega)$.
\end{enumerate-math}
}
Actually, Landau  considered $L^2$ versions of the sampling and interpolation problems
(a simple proof  can be found in \cite{no}).
The above results for the   Bernstein space can be deduced e.g.\ as in \cite[Theorem 2.1]{ou2}.


\section{Spectral gaps}

\subsection{}
A measure (or a distribution) $\mu$ is said to have a \emph{spectral gap} of size $a > 0$ if 
the Fourier transform $\ft\mu$ vanishes on a ball of radius $a$.

In dimension one, there is a simple  condition which is necessary for a u.d.\ set $\Lam$
to support a measure with a spectral gap.

\begin{proposition}\label{prop1}
Let $\Lambda\subset\mathbb R$ be a u.d.\ set, $d(\Lambda) \geq \delta > 0$. 
Assume that $\Lambda$ supports a non-zero measure $\mu$,  such that
$\ft{\mu}$ vanishes on the open interval $(0,a)$ for some $a>0$. Then
$$D_{\#}(\Lambda)\ge c(a,\delta),$$
where $c(a,\delta) > 0$ depends on $a$ and $\delta$ only.
\end{proposition}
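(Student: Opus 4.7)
The plan is to argue by contradiction, combining a reduction to a bounded summable measure with a Tur\'an--Nazarov type estimate on exponential sums. First, by modulating $\mu$ by $e^{-i\pi a x}$ — which preserves $\Lam$ and $|\mu(\lam)|$ — I may assume the spectral gap is centered at $0$, so $\ft\mu$ vanishes on $(-a/2, a/2)$. Next, fix a Schwartz bump $\eta \in C_c^\infty((-a/4, a/4))$ with $\ft\eta(0) > 0$, and replace $\mu$ by the pointwise product $\nu(\lam) := \ft\eta(\lam)\mu(\lam)$. By \lemref{lemma14}, $|\mu(\lam)|$ has at most polynomial growth while $\ft\eta$ is Schwartz, so $\nu \in \ell^1(\Lam)$; and by translating $\eta$ if necessary, $\nu$ can be arranged to be nonzero. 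Its Fourier transform $\ft\nu = \ft\mu \ast \eta$ is then a bounded continuous function vanishing on $(-b,b)$ with $b := a/4$. This reduces matters to proving the density bound for such a nonzero bounded summable measure $\nu$ supported on $\Lam$ with $\ft\nu$ continuous and vanishing on $(-b,b)$.

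Now suppose for contradiction $D_\#(\Lam) < c$ for a small $c > 0$ to be fixed. Then for arbitrarily large $R$ the truncation $\nu_R := \nu \cdot \1_{B_R}$ has at most $m_R \leq 2cR$ atoms, and its Fourier transform
\[
P_R(\xi) \;=\; \sum_{\lam \in \Lam \cap B_R} \nu(\lam)\, e^{-2\pi i\lam\xi}
\]
is an exponential sum with $m_R$ terms. Since $\ft\nu \equiv 0$ on $(-b, b)$, on that interval $P_R = -\ft{\nu - \nu_R}$, so $\sup_{(-b,b)}|P_R| \leq \|\nu - \nu_R\|_M =: \eps_R$, which tends to $0$ since $\nu \in \ell^1$. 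On the other hand, by Wiener's formula for almost-periodic functions, $\sup_{\R}|P_R| \geq \|\nu_R\|_{\ell^2}$, which tends to $\|\nu\|_{\ell^2} > 0$, so $P_R$ is definitely not uniformly small on $\R$.

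To close the argument, apply the Tur\'an--Nazarov inequality — for an exponential sum $p$ of $n$ terms and any $E \subset I$, $\sup_I |p| \leq (A|I|/|E|)^{n-1}\sup_E |p|$ — to $P_R$ with $E = (-b, b)$ and $I$ large enough to capture the almost-periodic lower bound. The inequality then forces $m_R$ to grow with $R$; combined with the uniform discreteness $d(\Lam) \geq \delta$, which constrains the frequencies of $P_R$ to lie on a u.d.\ set and hence limits how concentrated they can be, the growth rate yields the linear bound $m_R \gtrsim c(a,\delta) R$, contradicting $m_R \leq 2cR$. The main obstacle is precisely this last calibration: a direct application of Tur\'an--Nazarov gives only $m_R \gtrsim \log(1/\eps_R)/\log(A|I|/(2b))$, which is too weak since $\nu$ has merely Schwartz (not exponential) decay. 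The proof must therefore exploit the u.d.\ structure of the frequencies $\Lam \cap B_R$ to upgrade this logarithmic dependence to the linear growth needed for a genuine density bound depending on both $a$ and $\delta$.
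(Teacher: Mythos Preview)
Your reduction to a summable measure with a centered spectral gap is fine and parallels the paper's first step. The genuine gap is exactly the one you flag in your last paragraph, and it is not a detail to be filled in later: the Tur\'an--Nazarov route cannot produce the linear lower bound $m_R \gtrsim c(a,\delta)R$. With frequencies $\delta$-separated, an Ingham-type inequality lets you take $|I|\sim 1/\delta$ independently of $R$, so Tur\'an--Nazarov yields
\[
c_0 \;\le\; \sup_I |P_R| \;\le\; \Big(\tfrac{A}{a\delta}\Big)^{m_R-1}\eps_R,
\]
and hence $m_R \gtrsim \log(1/\eps_R)$. But $\eps_R$ is the tail mass of a measure obtained by multiplying $\mu$ by a band-limited Schwartz function; such a function is entire of exponential type and therefore cannot decay exponentially on $\R$. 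So $\eps_R$ decays faster than any power of $R$ but never like $e^{-cR}$, and you are stuck with $m_R \gtrsim N\log R$ for every $N$---far short of a positive density. Invoking ``the u.d.\ structure of the frequencies'' does not help here: Tur\'an--Nazarov is already sharp for arbitrary frequencies, and the $\delta$-separation was used to fix $|I|$; there is no further improvement to be extracted on that side of the inequality.

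The paper bypasses this obstruction by a dual, constructive argument. Instead of estimating the exponential sum $P_R$, it builds (\lemref{lemma13}) a test function $\varphi$ with $\spec(\varphi)\subset(0,a)$, $\varphi(0)=1$, $\varphi|_{\Lambda\cap(-R,R)\setminus\{0\}}=0$, and $|\varphi|\le 1$ on $\{|x|\ge R\}$, \emph{provided} $\#(\Lambda\cap(-R,R))/(2R)<c(a,\delta)$. The construction is an explicit product $\prod_{\lambda}(z-e^{i\pi\lambda/R})/(1-e^{i\pi\lambda/R})$ in $z=e^{i\pi x/R}$, whose sup on $|z|=1$ is controlled via the $\delta$-separation by $(e/(\delta\eps))^{2\eps R}$ with $\eps=\#\Lambda/(2R)$; this is then damped by a high power of a fixed Schwartz function with spectrum in $(0,a/4)$. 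Pairing $\varphi$ against the finite measure $\mu$ kills the gap and the interior atoms, leaving only the tail $\sum_{|\lambda|\ge R}|\mu(\lambda)|\to 0$, hence $\mu(0)=0$. The point is that the explicit product gives a bound whose exponent is linear in $R$ (with coefficient $\to 0$ as $\eps\to 0$), which is precisely what Tur\'an--Nazarov cannot deliver from your side.
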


The proof given below is similar to the one used in \cite[pp. 1044--1045]{ou1}. It is based on the following

\begin{lemma}\label{lemma13}
Let $\Lambda$ be a finite set contained in $(-R,R)\setminus(-\delta, \delta)$, where $d(\Lambda) \geq \delta > 0$, $R\ge 1$,  
and let $a>0$. There is $c(a,\delta) >0$ such that if $(\#\Lambda)/(2R) <c(a,\delta)$ then one can find a Schwartz function
$\varphi$ with the following properties:
\[
\varphi(0)=1, \quad  \varphi(\lam)=0 \quad  (\lam\in \Lambda), \quad  \spec(\varphi) \subset (0,a), \quad 
\sup\limits_{|x|\ge R}| \varphi(x)| \le 1.
\]
\end{lemma}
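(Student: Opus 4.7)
The plan is to construct $\varphi$ explicitly as a three-factor product $\varphi(x) = e^{\pi i a x} P(x) H(x)$: a polynomial $P$ carrying the prescribed zeros; a Schwartz function $H$ whose compactly supported Fourier transform supplies both the bandwidth budget and rapid decay at infinity; and an exponential modulation shifting the spectrum so that it sits in the open interval $(0, a)$ rather than being symmetric around the origin.

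First I would take $P(x) := \prod_{\lambda \in \Lambda}(1 - x/\lambda)$. This gives $P(0) = 1$, $P(\lambda) = 0$ for each $\lambda \in \Lambda$, and, since $P$ is polynomial, its distributional spectrum equals $\{0\}$. For $|x| \geq R$ the crude estimate $|x - \lambda| \leq 2|x|$ yields $|P(x)| \leq (2|x|)^N / \prod_\lambda |\lambda|$ with $N := \#\Lambda$. The denominator is controlled from below using the $\delta$-uniform discreteness together with $|\lambda| \geq \delta$: ordering the positive and negative branches of $\Lambda$ separately one has $|\lambda^\pm_k| \geq k\delta$, so a Stirling estimate yields $\prod_\lambda |\lambda| \geq (N\delta/(2e))^N$.

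Next I would build a Schwartz function $H$ with $\spec(H) \subset (-a/2, a/2)$, $H(0) = 1$, and explicit polynomial decay $|H(x)| \leq (C_1 m/(a|x|))^m$ for an integer parameter $m$ to be chosen. A concrete recipe is to take $\ft H$ to be an $m$-fold convolution of a smooth non-negative bump of integral one supported in an interval of length of order $a/m$, rescaled so that $\ft H$ is supported strictly inside $(-a/2, a/2)$; the decay then follows from $m$ integrations by parts in the Fourier integral. Setting $\varphi(x) := e^{\pi i a x} P(x) H(x)$ produces a Schwartz function with $\varphi(0) = 1$, $\varphi(\lambda) = 0$ for $\lambda \in \Lambda$, and $\spec(\varphi) \subset (0, a)$, since multiplication by a polynomial does not enlarge the spectrum and the modulation shifts it by $a/2$.

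The crux is verifying $|\varphi(x)| \leq 1$ for $|x| \geq R$. Multiplying the two pointwise estimates and using $|x|^{N-m} \leq R^{N-m}$ for $m > N$ gives, for $|x| \geq R$,
\[
|\varphi(x)| \leq (4eR/(N\delta))^N (C_1 m/(aR))^m.
\]
Choosing $m = \lfloor aR/(C_1 e) \rfloor$ forces $(C_1 m/(aR))^m \leq e^{-m}$, and the right-hand side is $\leq 1$ whenever $N \log(4eR/(N\delta)) \leq m$, which translates into an inequality of the form $\nu \log(C_2/(\nu \delta)) \leq C_3\, a$ on $\nu := N/(2R)$, for absolute constants $C_2, C_3$. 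Since the left-hand side tends to $0$ as $\nu \to 0$, this defines an explicit threshold $c(a, \delta) > 0$; the hypothesis $R \geq 1$ (after a mild shrinking of $c$) guarantees that the chosen $m$ is a positive integer exceeding $N$, and the degenerate case $N = 0$ is handled directly by setting $P \equiv 1$. The main delicate point is precisely this quantitative balancing — the competition between the polynomial growth of $P$ at infinity and the polynomial decay of $H$ — and extracting the right shape of $c(a, \delta)$ from it; the Paley-Wiener construction of $H$ and the spectrum bookkeeping are routine.
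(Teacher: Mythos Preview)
Your argument is correct, but it differs in its core construction from the paper's. The paper maps $\Lambda$ onto the unit circle via $\lambda \mapsto e^{i\pi\lambda/R}$ and forms the \emph{trigonometric} polynomial
\[
P(z)=\prod_{\lambda\in\Lambda}\frac{z-e^{i\pi\lambda/R}}{1-e^{i\pi\lambda/R}},
\]
so that $x\mapsto P(e^{i\pi x/R})$ is bounded on all of $\R$ by $\max_{|z|=1}|P(z)|\le (e/(\delta\varepsilon))^{2\varepsilon R}$, with $\varepsilon=\#\Lambda/(2R)$. This bounded factor is then multiplied by $(\psi(x/R))^{\lfloor R\rfloor+1}$ for a fixed Schwartz $\psi$ with $\spec(\psi)\subset(0,a/4)$ and $\sup_{|x|\ge1}|\psi(x)|=\gamma<1$; the bound for $|x|\ge R$ becomes $[\gamma(e/(\delta\varepsilon))^{2\varepsilon}]^R$, and the balancing reduces to making the bracketed quantity less than one. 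Your route instead keeps an \emph{algebraic} polynomial $P(x)=\prod(1-x/\lambda)$, which grows like $|x|^{N}$, and compensates with a Paley--Wiener factor $H$ of adjustable decay order $m$; this forces you to optimize over $m$ and leads to the inequality $\nu\log(C_2/(\nu\delta))\le C_3 a$ on $\nu=N/(2R)$. Both approaches yield a threshold $c(a,\delta)$ of the same qualitative type. The paper's circle trick buys a cleaner endgame (a single fixed $\psi$, no auxiliary parameter, and a visibly geometric bound), while your construction is more elementary in that it avoids the periodization and handles the spectrum bookkeeping via the observation that polynomial multiplication does not enlarge $\spec$.
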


\begin{proof}
It will be convenient to assume that the number of points in $\Lam$ is even (if not,
we may just add a point to $\Lam$). Let $n:=(\# \Lambda)/2$ and $\eps:=n/R$. Define the polynomial
\[
P(z):=\prod\limits_{\lambda\in\Lambda}\frac{z-e^{i\pi\lambda/R}}{1-e^{i\pi\lambda/R}} \; .
\]
Then  $P(1)=1$. We have
\[
\max_{|z|=1}|P(z)|\le \prod\limits_{\lambda\in\Lambda} \frac{2}{2\sin\left|\frac{\pi\lambda}{2R}\right|}
 \le \prod_{\lambda\in\Lambda}\frac{R}{|\lambda|}.
\]
The  right-hand side is maximized when $\Lambda$ is the set $\{j \delta : 1 \leq |j| \leq n\}$. Hence
\[
\max_{|z|=1}|P(z)|\le \frac{R^{2n}}{\delta^{2n} (n!)^2} \le \Big(\frac{eR}{\delta n}\Big)^{2n} =\Big(\frac{e}{\delta\varepsilon}\Big)^{2\varepsilon R}.
\]

Given $a>0$, we choose a  Schwartz function  $\psi$ satisfying
\[
\spec(\psi) \subset (0,a/4), \quad \psi(0)=1, \quad \gamma := \sup_{|x|\geq 1} |\psi(x)| < 1.
\]
Set
\begin{equation}
\label{deff1}
\varphi(x):=P(e^{i\pi x/R}) \cdot(\psi(x/R) )^{\lfloor R \rfloor + 1}\;.
\end{equation}
Then $\varphi$ is a Schwartz function, $\varphi(0)=1$, $\varphi(\lam)=0$ for $\lam\in \Lambda$. The spectrum of the first factor in 
\eqref{deff1} is contained in $[0, \eps]$, 
while the spectrum of the second factor is contained in $(0, a/2)$. Hence, if $\eps < a/2$ then 
$\spec(\varphi) \subset (0,a)$. Finally, we have
$$
\sup_{|x|\ge R}|\varphi(x)| \le  
\left[\gamma \left(\frac{e}{\delta \varepsilon}\right)^{2\varepsilon}\right]^R.$$
If $\varepsilon$ is sufficiently small (depending on $a,\delta$) then the expression in
square brackets is smaller than one. The lemma is therefore proved.
\end{proof}

\begin{proof}[Proof of Proposition \ref{prop1}]
It will be enough to prove the claim under the assumption  that $\mu$ is a finite measure. The general case may be easily reduced to this one by
multiplying $\mu$ on a Schwartz function $\varphi$, such that $|\varphi|>0$ and $\spec(\varphi) \subset (-a/2,0)$. 
Then $\varphi  \mu$ is a non-zero, finite measure (by Lemma \ref{lemma14}) supported by $\Lam$ and has a spectral gap
$(0, a/2)$.

Assume that $D_{\#}(\Lambda)<c(a,\delta)$, where $c(a,\delta)$ is given by Lemma
\ref{lemma13}. We  will show that this implies $\mu=0$. 
Observe that, by translating $\mu$ and $\Lam$, and since $D_{\#}(\Lambda-\lambda) = D_{\#}(\Lambda)$ for every $\lambda$,  it will be enough to
consider the case when $0\in\Lambda$ and to prove that $\mu(0)$ must be zero.

Choose a sequence $R_j\to\infty$ such that $(\#\Lambda_j)/(2R_j)<c(a,\delta)$, where
\[
  \Lam_j := \Lambda\cap(-R_j,R_j) \setminus \{0\},
\]
and let $\varphi_j$ be the function given by Lemma \ref{lemma13} with $\Lam = \Lam_j$ and $R=R_j$. Since
$\ft{\mu}$ vanishes on $(0, a)$ we have
\[
\int_{\R} \overline{\ft{\varphi}_j(t)} \, \ft{\mu}(t) \, dt =0.
\]
On the other hand,
\[
\int_{\R} \overline{\ft{\varphi}_j(t)} \, \ft{\mu}(t) \, dt = \int_{\R} \overline{\varphi_j(x)} \, d\mu(x) = \mu(0)+
\sum_{|\lam| \geq R_j}  \overline{\varphi_j(\lam)} \, \mu(\lam).
\]
It follows that
\[
|\mu(0)|\le 
\sum_{|\lam| \geq R_j}  |\mu(\lam)| \to 0 \quad (j\to\infty),
\]
hence $\mu(0)=0$.
\end{proof}

\subsection*{Remarks}
1. A similar result can be found in \cite[Proposition 7]{kah}.

2. In \cite{mp}
                 a complete characterization is given of u.d.\ sets in $\R$
                 which may support a finite measure with a
                 spectral gap of given size, in terms of the lower
                 Beurling-Malliavin density. It follows from this characterization that one may take $c(a,\delta)=a$
in Proposition \ref{prop1} (however    we do not use  this result).

\subsection{}  
The situation in the multi-dimensional case $(n>1)$ is different, and the existence
of a spectral gap  is not sufficient to make a conclusion about the density of
the support. As a simple example consider the set $\Lam = \Z \times \{0\}$
in $\R^2$, which has density zero, but which is the  support of  the measure
\[
\mu = \sum_{n\in\Z} (-1)^n \, \delta_{(n,0)}
\]
having  a spectral gap around the origin.

 However, if a u.d.\ set $\Lam$ supports a measure which has not just
 a spectral gap, but an \emph{isolated atom} in the spectrum,
               then the support must have positive density. More
               precisely, we have the following

\begin{lemma}\label{lemma16}
Let $\Lam$ be a u.d.\ set in $\R^n$. Assume that $\Lam$ supports a  measure $\mu$  satisfying
\eqref{eqtrbdd}, and such that $\spec(\mu) \cap B_a = \{0\}$ for some $a>0$. Then
$$D^-(\Lambda) \geq c(a,n),$$
where $c(a,n)>0$ depends on $a$ and $n$ only.
\end{lemma}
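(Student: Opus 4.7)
The plan is to extract from the hypothesis an $L^2$ sampling inequality for functions with Fourier support in $\overline{B_{a/2}}$, and then invoke Landau's density theorem.

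Pick a Schwartz function $\varphi$ with $\ft\varphi\in C_c^\infty(B_a)$ and $\ft\varphi\equiv 1$ on a neighborhood of $0$. Since $\spec(\mu)\cap B_a=\{0\}$, the distribution $\ft\varphi\,\ft\mu$ is supported at the single point $0$, hence a finite linear combination of derivatives $\partial^\alpha\delta_0$; taking inverse Fourier transforms, $\varphi*\mu$ is therefore a polynomial $P$. On the other hand, the bound $\sup_\lambda|\mu(\lambda)|\le M$ from \eqref{eqtrbdd}, the u.d.\ property of $\Lam$, and the Schwartz decay of $\varphi$ together give
\[
|(\varphi*\mu)(x)|\le M\sum_{\lam\in\Lam}|\varphi(x-\lam)|<\infty
\]
uniformly in $x\in\R^n$. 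Since bounded polynomials are constant, $P\equiv c$ for some $c\in\C$, and $c\ne 0$: otherwise $\ft\varphi\,\ft\mu\equiv 0$ would force $\ft\mu$ to vanish on a neighborhood of $0$, contradicting $0\in\spec(\mu)$. Equivalently, $\ft\mu$ restricted to $B_a$ equals $c\,\delta_0$.

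This identity means that, for every Schwartz $g$ with $\ft g\in C_c^\infty(B_a)$,
\[
\sum_{\lam\in\Lam}\mu(\lambda)\,g(\lambda)=c\int_{\R^n}g.
\]
Applied to $g=|h|^2$ with $\ft h\in C_c^\infty(B_{a/2})$ (so that $\ft g$ is supported in $B_{a/2}-B_{a/2}=B_a$) and combined with $|\mu(\lambda)|\le M$, this gives
\[
\|h\|_2^2\le\frac{M}{|c|}\sum_{\lam\in\Lam}|h(\lambda)|^2.
\]
A standard density argument extends the inequality to all $h\in L^2(\R^n)$ with $\ft h$ supported in $\overline{B_{a/2}}$, so $\Lam$ is an $L^2$ sampling set for the associated Paley--Wiener space. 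Landau's density theorem (in the $L^2$ formulation from which the Bernstein version in Section 3 is derived) then gives $D^-(\Lam)\ge\mes(B_{a/2})$, a positive constant depending only on $a$ and $n$.

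The main obstacle is the polynomial-to-constant reduction in the first step: a priori, higher derivatives $\partial^\alpha\delta_0$ with $|\alpha|\ge 1$ in $\ft\mu|_{B_a}$ would produce a polynomial $P$ of positive degree, and the argument would collapse. The uniform boundedness of the atoms of $\mu$, combined with uniform discreteness of $\Lam$, is exactly what forces $P$ to be bounded and hence constant. A conceptually important feature is that Landau's Nyquist bound $\mes(B_{a/2})$ is insensitive to the sampling constant $M/|c|$, so the possibly tiny ratio $|c|/M$ does not pollute the final density estimate.
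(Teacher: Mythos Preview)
Your proof is correct. Both you and the paper begin by showing that $\hat\mu|_{B_a}=c\,\delta_0$ with $c\ne 0$; the paper states this more tersely, noting that \eqref{eqtrbdd} forces $\hat\mu$ to have order zero near the origin, while your bounded-polynomial argument makes the same point explicitly. The paths then diverge in the middle step. The paper constructs, for each $x\in\R^n$, a localized measure $\nu_x:=\psi(\cdot-x)\,\mu$ satisfying $\hat\nu_x(t)=e^{-2\pi i\langle x,t\rangle}$ near $0$, obtains the reproducing formula $f(x)=\int f\,\overline{d\nu_x}$ for $f$ in the Bernstein space $\B(\overline{B_{a/4}})$, and from this deduces $L^\infty$ sampling. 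You instead test the identity $\sum_\lambda \mu(\lambda)\,g(\lambda)=c\int g$ against $g=|h|^2$ and read off the $L^2$ sampling inequality directly. Your route is a bit shorter and lands on exactly the $L^2$ formulation that Landau originally proved, so no further reduction is needed; the paper's route yields the $L^\infty$ statement (requiring the extra deduction mentioned after Landau's theorem in the preliminaries) but also produces an explicit pointwise representation of Bernstein-space functions from their samples on $\Lam$, which is of some independent interest.
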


\begin{proof}
It is well-known that a distribution supported by the origin is a finite linear combination of derivatives
of $\delta_0$. But condition \eqref{eqtrbdd} ensures that the distribution $\ft\mu$ can only have order zero in a neighborhood
of the origin. Hence there is a non-zero complex number $w$ such that $\hat\mu=w\, \delta_0$ in $B_a$.
By multiplying $\mu$ on $1/w$ we may suppose that $w=1$. 

Fix a Schwartz function $\psi$, such that
$\supp(\hat\psi)\subset B_{a/2}$ and $\hat\psi=1$ in $B_{a/3}$.
For each $x\in\mathbb R^n$ define a measure $\nu_x$ by
$$\nu_x := \psi_x \, \mu, \quad \text{where}\quad \psi_x(y):=\psi(y-x).$$
Then we have the following properties:
\begin{enumerate-math}
\item
$\nu_x$ is supported by $\Lambda$;
\item
$\hat\nu_x(t)= (\ft\psi_x \ast \ft\mu)(t) =  e^{-2\pi i\langle x,t\rangle}$ in $B_{a/3}$;
\item
$\nu_x$ is a finite measure, and 
$\int|d\nu_x|\le C$ 
 for some constant $C$ not depending on $x$.
\end{enumerate-math}

Let $f$ be a function in the Bernstein space $\B(\Omega)$, where $\Omega := \{x : |x| \leq a/4\}$. 
Let $\varphi$ be  a Schwartz function
such that $\varphi(0)=1$ and $\spec(\varphi)$ is contained
in the open unit ball. Then  $f_\delta(x) := f(x) \varphi(\delta x)$ is a Schwartz function,
and $\spec(f_\delta) \subset B_{a/4+\delta}$. Hence
$$f_\delta(x)= \int \ft f_\delta(t) \, e^{2\pi i\langle x,t\rangle} \, dt
= \int \ft f_\delta(t) \, \overline{\hat\nu_x(t)} \, dt  =  \int f_\delta \, \overline{d\nu_x}\,.$$
Letting $\delta \to 0$ it follows (e.g.\ by the bounded convergence theorem) that
$$f(x)= \int f \, \overline{d\nu_x}\,,$$
and hence
$$|f(x)|\le C \sup_{\lambda\in\Lambda}|f(\lambda)|.$$
As this holds for any  $f \in \B(\Omega)$, we get that $\Lambda$ is a sampling set for $ \B(\Omega)$.
By Landau's theorem we therefore have $D^-(\Lambda)\ge \mes (\Omega) = c(a,n)$, and this proves the claim.
\end{proof}

\subsection{}  
\begin{lemma}\label{lem6}
Given  $a >0$ there is $R=R(a,n)$ such that, if a measure $\nu$
is supported by a u.d.\ set $Q$ in $\R^n$, $d(Q) > a$, and if $\ft{\nu}$ 
                              vanishes on a ball of radius $R$,
                              then $\nu = 0$.
\end{lemma}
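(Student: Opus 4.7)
The plan is a contraction argument on the supremum of $|\phi\nu|$ over $Q$ (where $\phi$ will be chosen below). After modulating $\nu$ by an exponential we may assume that $\ft\nu$ vanishes on $B_R$ centered at the origin. Fix $q_0\in Q$; our task is to show $\nu(q_0)=0$. To do so we will construct a Schwartz bump $\psi$ with $\psi(0)=1$ and spectrum contained inside the gap, so that pairing its translates $\psi(\cdot-p)$ against $\nu$ both vanishes by the spectral gap and expresses $\nu(p)$ up to a small tail controlled by the uniform discreteness of $Q$.

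The first step is a reduction to a finite measure. Let $\phi$ be a Schwartz function with $\ft\phi$ supported in a small ball $B_\eps$ and $\phi(q_0)\ne 0$ (obtained by translating a fixed bump). By \lemref{lemma14} the polynomial bound on $|\nu(q)|$ combined with the Schwartz decay of $\phi$ makes $\phi\nu$ an absolutely summable measure supported on $Q$, and $\ft\phi\ast\ft\nu$ vanishes on $B_{R-\eps}$. It will suffice to prove $\phi\nu=0$, since then $\phi(q_0)\nu(q_0)=0$ and hence $\nu(q_0)=0$.

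The second step is the contraction. Choose a Schwartz $\psi$ with $\psi(0)=1$, $\spec(\psi)\subset B_{R-\eps}$, and rapid decay $|\psi(x)|\le C_N(1+(R-\eps)|x|)^{-N}$ for every $N$ (by rescaling a fixed bump whose Fourier transform is a smooth convolution square supported in the unit ball). For each $p\in Q$ the translate $\psi(\cdot-p)$ has spectrum in the same ball, so the spectral gap of $\widehat{\phi\nu}$ forces
\[
\sum_{q\in Q}\phi(q)\nu(q)\,\psi(q-p)=\dotprod{\phi\nu}{\psi(\cdot-p)}=0.
\]
Isolating the term $q=p$, setting $M:=\sup_{q\in Q}|\phi(q)\nu(q)|<\infty$, and using $d(Q)>a$ to bound by $C_n k^{n-1}$ the number of points of $Q$ at distance in $[ka,(k+1)a)$ from $p$, we obtain
\[
|\phi(p)\nu(p)|\le M\sum_{k=1}^\infty\frac{C_n C_N k^{n-1}}{(1+(R-\eps)ka)^N}\le\frac{M\,C(n,N)}{((R-\eps)a)^N}
\]
for every $N>n$. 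Taking the supremum over $p\in Q$ and choosing $N>n$ and $R=R(a,n)$ large enough that the coefficient in front of $M$ is strictly less than $1$ yields the contraction $M\le\theta M$ with $\theta<1$, hence $M=0$.

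The main (mild) difficulty is that the tail estimate must be uniform in $p$; this is exactly why the reduction to a finite measure is essential, since the polynomial bound $|\nu(q)|\le C(1+|q|)^N$ from \lemref{lemma14} alone would introduce a $p$-dependent growth factor and prevent the contraction from closing.
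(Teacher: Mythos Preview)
Your argument is correct and gives a self-contained proof of the lemma, but the paper proceeds differently. The paper invokes an Ingham-type interpolation theorem as a black box: for $d(Q)>a$ there is $R=R(a,n)$ such that $Q$ is an interpolation set for the Bernstein space $\B(\Omega)$ with $\Omega=\{|x|\le R/2\}$. One then picks, for each fixed $\lambda\in Q$, a function $f\in\B(\Omega)$ with $f(\lambda)=1$ and $f|_{Q\setminus\{\lambda\}}=0$, multiplies it by a Schwartz function with spectrum in $B_{R/2}$ to obtain a Schwartz test function $\varphi$ with $\spec(\varphi)\subset B_R$ that isolates the single mass $\nu(\lambda)$, and concludes $\nu(\lambda)=\langle\ft\nu,\overline{\ft\varphi}\rangle=0$. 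Your contraction approach sidesteps the interpolation black box entirely: instead of manufacturing a perfect Lagrange-type interpolant at each point, you use a single rescaled bump $\psi$ uniformly at all points and bound the off-diagonal contribution $\sum_{q\ne p}|\psi(q-p)|$ by a constant that can be made $<1$ through the scaling, closing a sup-norm inequality. This is essentially a direct proof of a weak form of the Ingham inequality; it is more elementary (no external reference needed) and yields an explicit choice of $R$, whereas the paper's version is shorter once the interpolation theorem is granted and situates the lemma in its natural sampling/interpolation framework.
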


\begin{proof} 
This follows from Ingham type theorems used in interpolation theory in $\R^n$.
Given  $a >0$  there is $R=R(a,n)$ such that if $Q$ is any u.d.\ set in $\mathbb R^n,$ $d(Q)>a,$ then $Q$ is an interpolation set for 
the Bernstein space $\B(\Omega)$, where $\Omega := \{x : |x| \leq R/2\}$
(see for example \cite{ou2}).

Let $\nu$ be a measure supported by  $Q$ and such that the distribution $\hat\nu$ vanishes on $B_R$
(there is no loss of generality in assuming that the ball is centered at the origin).
Given  $\lambda\in Q$ one can find  $f \in \B(\Omega)$ such that
$f(\lambda)=1$ and $f(\lambda')=0$ for any $\lambda'\in Q$, $\lambda'\ne\lambda$.
Let $\varphi(x) := f(x)\psi(x)$, where $\psi$ is  a Schwartz function
such that $\psi(\lambda)=1$ and $\spec(\psi) \subset B_{R/2}$.
Then $\varphi$ is a Schwartz function, satisfying
\[
\varphi(\lambda)=1, \quad \varphi(\lambda')=0 \quad (\lambda'\in Q, \; \lambda'\ne\lambda), \quad
\spec(\varphi) \subset B_R.
\]
It follows that
$$\nu(\lambda)=\int \overline \varphi \, d\nu=\langle \hat\nu, \overline{\ft \varphi} \rangle=0.$$
As this holds for any $\lambda\in Q$, we obtain $\nu=0$.
\end{proof}

\section{Delone and Meyer sets}
\label{secmey}

\subsection{}                  
We will need the following concepts of Delone and Meyer sets in $\R^n$.

\begin{definition}
$\Lam$ is called a Delone set if $\Lam$ is both a u.d.\ and relatively dense set.
\end{definition}

\begin{definition}
$\Lam$ is called a Meyer set if the following two conditions are satisfied:
\begin{enumerate-math}
\item
$\Lam$ is a Delone set;
\item
There is a finite set $F$ such that $\Lam-\Lam\subset \Lam+F$.
\end{enumerate-math}
\end{definition}

 Meyer \cite{mey1, mey2} discovered important  connections of this class of sets
to certain problems in harmonic analysis. In particular, to the characterization of classes
of almost-periodic functions with common almost-periods, and to the concepts of Pisot
and Salem numbers in algebraic number theory.

\subsection{}
Meyer observed that a Delone set $\Lam$ is a Meyer set if and only if $\Lam-\Lam-\Lam$ is u.d.\ (see \cite{mey2}).

                    Lagarias \cite{lag1} proved that if $\Lam$ is a Delone set
                    and $\Lam-\Lam$ is u.d.\ then $\Lam$ is a Meyer set.
              We need a stronger version of this result:
            
\begin{lemma}\label{lem4}
Let $\Lambda\subset \mathbb R^n$ be a Delone set, such that $D^+(\Lam-\Lam) < \infty$. Then
$\Lam$ is a Meyer set.
\end{lemma}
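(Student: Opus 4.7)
The plan is to reduce to Lagarias's theorem cited just above, by upgrading the hypothesis: I would show that, under the Delone assumption on $\Lam$, the condition $D^+(\Lam-\Lam) < \infty$ already forces $\Lam-\Lam$ to be uniformly discrete. Once this is established, Lagarias's theorem immediately gives that $\Lam$ is a Meyer set.

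The first step will be a direct observation: $D^+(\Lam-\Lam) < \infty$ automatically yields a uniform local bound, namely that for every fixed $r > 0$ there is a constant $C = C(r)$ with
\[
\#\bigl((\Lam-\Lam) \cap (x + B_r)\bigr) \le C \quad (x \in \R^n).
\]
Indeed, for any $R \ge r$ sufficiently large, $\sup_x \#((\Lam-\Lam) \cap (x+B_R)) \le (D^+(\Lam-\Lam)+1)|B_R|$, and any ball of radius $r$ is contained in one of radius $R$. The second step is to derive from this the uniform discreteness of $\Lam-\Lam$, by contradiction. If $\Lam-\Lam$ fails to be u.d., one picks $t = \lam - \mu$ and $t' = \lam' - \mu'$ in $\Lam-\Lam$ with $|t - t'| < \eps$ arbitrarily small, yielding a ``near-parallelogram'' $\lam - \lam' \approx \mu - \mu'$ in $\Lam$. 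I would then aim to produce, from such a single near-coincidence together with the relative density of $\Lam$, many genuinely distinct near-coincidences of $\Lam-\Lam$ inside a fixed ball, violating the uniform bound $C(r)$.

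The hard part will be this propagation step. A naive translation $(\lam, \mu) \mapsto (\lam+\xi, \mu+\xi)$ of the near-coincidence by an arbitrary vector $\xi$ need not land back in $\Lam \times \Lam$, so one cannot simply shift a near-coincidence to a freely chosen location. Instead, one must exploit the Delone structure: every large ball $B_R$ contains of order $R^n$ elements of $\Lam$, and each should give rise to a distinct near-coincidence in $\Lam-\Lam$ via a careful choice of companion point. Organising these contributions so that sufficiently many of the resulting near-coincidences in $\Lam-\Lam$ fall into a common bounded region, producing the desired violation of the uniform local bound, is the main combinatorial task and the crux of the strengthening over Lagarias's original argument.
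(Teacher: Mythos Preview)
Your plan has a genuine gap at exactly the point you flag as ``the hard part'': the propagation step. You start from a single near-coincidence $t,t'\in\Lam-\Lam$ with $|t-t'|<\eps$ and hope to manufacture many distinct elements of $\Lam-\Lam$ inside a fixed ball. But from $t=a-b$, $t'=c-d$ you only get that $t-t'$ lies in $(\Lam-\Lam)-(\Lam-\Lam)$, not in $\Lam-\Lam$; there is no evident mechanism that, for a generic $\lambda\in\Lam$, produces a companion $\lambda'\in\Lam$ with $\lambda-\lambda'$ close to a prescribed vector. The Delone property guarantees many points of $\Lam$ in large balls, but it says nothing about the fine distribution of their pairwise differences, which is precisely what you would need. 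In fact, the statement ``$\Lambda$ Delone and $D^+(\Lam-\Lam)<\infty$ imply $\Lam-\Lam$ is u.d.'' is equivalent in strength to the lemma itself (since Meyer sets have u.d.\ difference sets), so proving it directly cannot be easier than proving the lemma.

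The paper's route avoids this detour entirely. Rather than upgrading the hypothesis and citing Lagarias as a black box, it opens up Lagarias's chain argument and observes that the only place uniform discreteness of $\Lam-\Lam$ enters is to ensure that the sets
\[
V(h)=(\Lam-\Lam)\cap(h+B_{2R})
\]
have cardinality bounded independently of $h$. But this bound follows immediately from $D^+(\Lam-\Lam)<\infty$. One then reruns the chain: connect $x$ to $0$ by a path $x_0,\dots,x_s$ with steps $<R$, pick nearby points $p_i,q_i\in\Lam$ to $x_i$ and $x_i+h$, set $h_i=q_i-p_i$, note that consecutive $h_i$ differ by elements of a fixed finite set and that all $h_i$ lie in $V(h)$, hence take at most $M$ values; removing cycles shows $h\in\Lam+F$ with $F$ finite. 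The weakening of the hypothesis costs nothing in this argument, whereas in your approach it is the entire difficulty.
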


             The proof below follows Lagarias' argument,
             and simplifies it, basing  also on \cite{moo} (see also \cite{baake}).

\begin{proof}[Proof of \lemref{lem4}]
By translation we may assume that $0\in\Lambda$. We fix $R>0$ such that every ball
of radius $R$ intersects $\Lambda$.

Let $h\in \Lambda-\Lambda$. Then $h = y-x$ for some $x,y\in \Lambda$.
Choose a sequence $x_0,x_1,\dots,x_s$ such that $x_0=x$, $x_s = 0$, $|x_i-x_{i+1}|<R$.
Define $y_i=x_i+h$, then $y_0=y$, $y_s=h$, $|y_i-y_{i+1}|<R$.
Choose $p_i,q_i\in\Lambda$ such that $|p_i-x_i|<R$, $|q_i-y_i|<R$ $(0\leq i \leq s)$,
where $p_0=x$, $q_0=y$ and $p_s=0$ (recall that $0\in \Lambda$).
It follows that $p_i-p_{i+1}$ and $q_i-q_{i+1}$ belong to the finite set $F_1 := (\Lambda-\Lambda)\cap B_{3R}$.

Set $h_i:=q_i-p_i$. Then
\[
h_{i}-h_{i+1} =(q_i-q_{i+1})- ( p_i-p_{i+1})   \in F_2 := F_1-F_1.
\]
Also
\[
|h_i-h|=|(q_i-y_i)-(p_i-x_i)|<2R,
\]
hence
\[
h_i \in V(h) := (\Lambda-\Lambda) \cap (h + B_{2R}).
\]

Since $D^+(\Lam-\Lam) < \infty$, there is a constant $M$
independent of $h$  such that $\#V(h)\leq M$.
Thus in the sequence $h_0, h_1, \dots, h_s$ appear at most $M$ distinct values. Write
\[
h_0 - h_s = (h_0-h_1)+(h_1-h_2)+\cdots+(h_{s-1}-h_s).
\]
If some $h_i$ and $h_j$ $(i<j)$ admit the same value, then we may remove from the sum above
all the terms  $(h_k-h_{k+1})$, $i \leq k < j$. By removing all such ``cycles'' 
it follows  that $h_0 - h_s$ belongs to the finite set $F$ consisting of all vectors
which may be expressed as the sum of at most $M-1$ elements from $F_2$. Hence
\[
h = h_0 = h_0  + (q_s - h_s)  = q_s + (h_0 - h_s) \in \Lambda +F.
\]
This proves  that $\Lambda-\Lambda\subset \Lambda+F$, so $\Lambda$ is a Meyer set.
\end{proof}

\subsection{} 
Let $\Gamma$ be a lattice in $\R^{n+m}= \mathbb R^n\times \mathbb R^m$ $(m\ge 0)$, and let $p_1$ and $p_2$ denote the projections 
onto $\R^n$ and $\R^m$, respectively. We assume that the restriction of $p_1$ to $\Gamma$ is
injective, and that $p_2(\Gamma)$ is dense in $\R^m$. Let  $\Omega$ be a bounded set in $\mathbb R^m$.

\begin{definition}
Under the assumptions above, the set
\begin{equation}
\label{eqmod}
\M(\R^n\times  \R^m,\Gamma,\Omega):=
\{p_1(\gamma):\gamma\in\Gamma,p_2(\gamma)\in\Omega\},
\end{equation}
is  called the {model set} defined by $\Gam$ and $\Omega$.
\end{definition}

This construction is known as  ``cut-and-project''.

Remark that the case  $m=0$ is not excluded in the above definition. In this case
one should  understand $\mathbb R^m$ to be $\{0\}$, and  the model
set obtained is just a lattice in $\R^n$.

The following theorem \cite[Sections II.5, II.14]{mey1} gives a characterization of Meyer sets in terms of model sets (see also \cite{moo}).

  \begin{theorem-m}[Meyer]
  Let $\Lambda$ be  a Delone set in $\mathbb R^n$. Then the following are equivalent:
\begin{enumerate-math}
\item
$\Lam$ is a Meyer set;
\item
There exists a model set
$M$ and  a finite set  $F$ such that $\Lambda\subset M+F$.
\end{enumerate-math}
\end{theorem-m}

\subsection{}

\begin{lemma}\label{lemma21}
Let $M=\M(\mathbb R^n\times\mathbb R^m,\Gamma,\Omega)$ 
be a model set in $\mathbb R^n,$ and suppose  that the boundary of
$\Omega$ is a set of Lebesgue measure zero in $\R^m.$ Then 
$$D^-(M)=D^+(M)=\frac{\mes(\Omega)}{\det(\Gamma)}.$$
\end{lemma}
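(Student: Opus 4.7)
The plan is to reduce the density computation to a lattice point counting problem for $\Gamma$ in a product region, and then establish the appropriate equidistribution statement.

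First I would use the injectivity of $p_1|_\Gamma$ to realize $\Gamma$ as the graph of a well-defined map: $\Gamma=\{(\lam,\phi(\lam)):\lam\in L\}$ where $L=p_1(\Gamma)$. Then by definition
\[
\#(M\cap (x+B_R))=\#\bigl(\Gamma\cap\bigl((x+B_R)\times\Omega\bigr)\bigr),
\]
so the problem is to count lattice points of $\Gamma\subset\R^{n+m}$ in the ``slab'' $(x+B_R)\times\Omega$ and show the count is asymptotic to $\mes(B_R)\mes(\Omega)/\det(\Gamma)$, uniformly in $x$.

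The main step is a Weyl-type equidistribution: for every continuous, compactly supported $f:\R^m\to\R$,
\[
\lim_{R\to\infty}\sup_{x\in\R^n}\left|\frac{1}{\mes(B_R)}\sum_{\substack{\gamma\in\Gamma\\ p_1(\gamma)\in x+B_R}}f(p_2(\gamma))-\frac{1}{\det(\Gamma)}\int_{\R^m}f(y)\,dy\right|=0.
\]
I would prove this by applying Poisson summation for $\Gamma$ to a suitably smoothed version of $F(s,t)=\1_{x+B_R}(s)f(t)$. The term coming from $\gamma^*=0$ in the dual lattice contributes exactly the desired main term $\mes(B_R)\hat f(0)/\det(\Gamma)$. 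For the error, every nonzero $\gamma^*=(\xi,\eta)\in\Gamma^*$ must satisfy $\xi\neq 0$: otherwise $(0,\eta)\in\Gamma^*$ with $\eta\neq 0$ would annihilate $p_2(\Gamma)$, contradicting its density in $\R^m$. The decay of $\hat\1_{B_R}$ (after smoothing to a bump equal to $1$ on $B_{R-1}$ and $0$ off $B_{R+1}$) together with the rapid decay of $\hat f$ then makes the error $o(\mes(B_R))$ uniformly in $x$.

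Once the equidistribution is in hand, the measure-zero boundary hypothesis on $\Omega$ does the rest. For every $\eps>0$ choose continuous compactly supported $f^-,f^+$ with $f^-\le \1_\Omega\le f^+$ and $\int(f^+-f^-)<\eps$. Sandwiching
\[
\sum_{p_1(\gamma)\in x+B_R}\!\!f^-(p_2(\gamma))\;\le\;\#(M\cap(x+B_R))\;\le\sum_{p_1(\gamma)\in x+B_R}\!\!f^+(p_2(\gamma))
\]
and applying the equidistribution to both sides gives, after letting $\eps\to 0$,
\[
\lim_{R\to\infty}\sup_{x\in\R^n}\left|\frac{\#(M\cap(x+B_R))}{\mes(B_R)}-\frac{\mes(\Omega)}{\det(\Gamma)}\right|=0,
\]
which simultaneously computes $D^-(M)$ and $D^+(M)$.

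The main technical obstacle is the Fourier-analytic control of the error in the equidistribution step: since the slab $(x+B_R)\times\Omega$ grows in only $n$ of $n+m$ directions, one cannot simply rescale and invoke a standard lattice-point asymptotic. Smoothing $\1_{B_R}$ to control $\hat\1_{B_R}$ in $\ell^1$ over $\Gamma^*$, while keeping the smoothing error of order $O(R^{n-1})=o(\mes(B_R))$, is the delicate part; alternatively one can replace this step by invoking the unique ergodicity of the $\R^n$-action on the compact torus $\R^{n+m}/\Gamma$, which follows from the density of $p_2(\Gamma)$, together with a uniform ergodic theorem for ball averages.
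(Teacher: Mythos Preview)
The paper does not actually prove this lemma: it merely records that the statement is well-known and cites \cite[Proposition 5.1]{meymat}. So there is nothing to compare at the level of argument --- your proposal supplies what the paper omits.

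Your outline is essentially the standard proof and is correct. A couple of small remarks. First, when you write ``rapid decay of $\hat f$'', note that for merely continuous compactly supported $f$ the transform $\hat f$ need not decay rapidly; the clean way is to prove the equidistribution first for Schwartz $f$ (where the Poisson-summation bookkeeping is unproblematic) and then pass to $C_c$ by density and the uniform bound on the number of $\gamma\in\Gamma$ with $p_1(\gamma)\in x+B_R$. Second, your observation that every nonzero $\gamma^*=(\xi,\eta)\in\Gamma^*$ has $\xi\neq 0$ is correct, but be aware that $p_1(\Gamma^*)$ may be dense in $\R^n$, so $|\xi|$ is not bounded below; the error control therefore really does rely on the decay of the smoothed $\widehat{\1_{B_R}}$ and cannot be replaced by a crude separation argument. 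Your alternative via unique ergodicity of the $\R^n$-action on $\R^{n+m}/\Gamma$ (which is exactly equivalent to the density of $p_2(\Gamma)$) is arguably the cleaner route and sidesteps this bookkeeping entirely.
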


This fact is well-known, see for example \cite[Proposition 5.1]{meymat}.

\subsection{}

For a set $A \subset \R^n$ we shall denote by  $\Z[A]$ the additive group generated by the elements of $A$.

\begin{lemma}\label{lemmaindep}
 Let $M= \M(\R^n\times  \R^m,\Gamma,\Omega)$ be a model set, and $F$ be a finite set in $\R^n$. Then
there is another model set  $M'= \M(\R^n\times  \R^m,\Gamma',\Omega')$  and  a finite set $F'$, such that
\[M+F \subset M'+F', \quad p_1(\Gamma')\cap \mathbb Z[F']=\{0\}, \quad \Gamma \subset \Gamma'.\]
  \end{lemma}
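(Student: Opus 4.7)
The plan is to enlarge $\Gamma$ by adjoining carefully chosen lifts of those parts of the vectors $f \in F$ that lie in the $\Q$-span of $p_1(\Gamma)$, while the remaining ``genuinely new'' components get absorbed into the new shift set $F'$. Concretely, I would set $V_1 := \Q \cdot p_1(\Gamma)$, consider the finite-dimensional $\Q$-subspace $V := V_1 + \Q[F] \subset \R^n$, and fix any $\Q$-linear complement $V_2$ so that $V = V_1 \oplus V_2$. Each $f \in F$ then has a unique decomposition $f = f^{(1)} + f^{(2)}$ with $f^{(i)} \in V_i$, and I take $F' := \{f^{(2)} : f \in F\}$.

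For each $f \in F$, since $f^{(1)} \in V_1$ there exist a positive integer $N_f$ and a (unique, by injectivity of $p_1|_\Gamma$) element $\gamma_f \in \Gamma$ with $p_1(\gamma_f) = N_f f^{(1)}$. Define the lift
\[
\tilde f := \Bigl(f^{(1)}, \tfrac{1}{N_f} p_2(\gamma_f)\Bigr) \in \R^n \times \R^m,
\]
which satisfies $N_f \tilde f = \gamma_f \in \Gamma$, and put
\[
\Gamma' := \Gamma + \sum_{f \in F} \Z \tilde f, \qquad \Omega' := \bigcup_{f \in F}\bigl(\Omega + \tfrac{1}{N_f} p_2(\gamma_f)\bigr).
\]
Since each generator $\tilde f$ is torsion modulo $\Gamma$, the quotient $\Gamma'/\Gamma$ is finite, so $\Gamma'$ is a full-rank lattice in $\R^{n+m}$ containing $\Gamma$; in fact $\Gamma' \subset \tfrac{1}{N}\Gamma$ for $N := \operatorname{lcm}(N_f)$. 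A short verification using $N_f \tilde f = \gamma_f$ and the injectivity of $p_1|_\Gamma$ shows that $p_1|_{\Gamma'}$ remains injective, and density of $p_2(\Gamma')$ is immediate since $\Gamma' \supset \Gamma$. Thus $M' := \M(\R^n \times \R^m, \Gamma', \Omega')$ is a genuine model set.

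For the three conclusions: $\Gamma \subset \Gamma'$ is immediate by construction; for $\lambda = p_1(\gamma) \in M$ with $p_2(\gamma) \in \Omega$, the identity $\lambda + f = p_1(\gamma + \tilde f) + f^{(2)}$ together with $p_2(\gamma + \tilde f) = p_2(\gamma) + \tfrac{1}{N_f} p_2(\gamma_f) \in \Omega'$ shows $\lambda + f \in M' + F'$; and since $p_1(\Gamma') \subset V_1$ while $\Z[F'] \subset V_2$, one has $p_1(\Gamma') \cap \Z[F'] \subset V_1 \cap V_2 = \{0\}$. The main delicacy is that $\Gamma'$ must simultaneously be a lattice \emph{and} preserve the injectivity of $p_1$; the lift $\tilde f$ is tailored precisely to achieve both, by coupling the $\R^n$ and $\R^m$ components through the same element $\gamma_f$ of $\Gamma$.
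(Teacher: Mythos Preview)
Your proof is correct and follows essentially the same approach as the paper's: split each $f\in F$ into a component in $\Q[p_1(\Gamma)]$ and a component in a $\Q$-complement, absorb the first into an enlarged lattice $\Gamma'$, and let the second form $F'$. The only noteworthy difference is that the paper picks a single common denominator $q$ and simply sets $\Gamma'=(1/q)\Gamma$, which inherits the lattice structure, the injectivity of $p_1$, and the density of $p_2(\Gamma')$ from $\Gamma$ for free, avoiding your separate verifications.
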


  \begin{proof}
The elements of $F$ generate a finite-dimensional vector space over the rationals $\Q$, which we denote
by $V=\Q[F]$. Let
  $U:=V\cap \mathbb Q[p_1(\Gamma)]$,  a linear subspace of $V$.
Let  $W$ be any linear subspace of $V$ such that $U\oplus W = V$.

Denote by $\theta_1, \dots, \theta_s$ the elements of $F$. Then each $\theta_j$ admits a unique representation as
$\theta_j=u_j+w_j$, where $u_j\in U$, $w_j\in W$. Since $U\subset \Q[p_1(\Gamma)]$ we may
find a non-zero integer $q$ and elements $\gamma_1,\dots,\gamma_s \in \Gamma$ such that
$u_j = p_1(\gamma_j / q)$, $1\leq j \leq s$. Define
\[
\Gamma':=(1/q)\Gamma, \quad \Omega':=\bigcup_{j=1}^s(\Omega+p_2(\gamma_j/q)), \quad
F':=\{w_1,\dots,w_s\}.
\]
Then  $\Gamma'$ is a lattice in $\R^n\times \R^m$,  the restriction of $p_1$ to ${\Gamma'}$ is  injective, and $p_2(\Gamma')$ is
dense in $\R^m$. The set $\Omega'$ is a bounded set in $\R^m$, and $F'$ is a finite set in $\R^n$.

Let
 $M'$ be the model set defined by
$\Gamma'$ and $\Omega'$.
We show that $M+F\subset M'+F'$. Indeed, an element  $\lambda \in M+F$ is of the form
$\lambda=p_1(\gamma)+\theta_j$, where  $\gamma\in\Gamma$ and $p_2(\gamma)\in \Omega$.
Set $\gamma' := \gamma + \gamma_j/q$, then $\gamma' \in \Gamma'$ and
$p_2(\gamma') \in\Omega'$. Hence
\[\lambda = p_1(\gamma')+w_j \in M'+F'.\]

Finally, observe that the set $p_1(\Gamma')\cap \mathbb Z[F']$ must be equal to $\{0\}$, since it
 is contained in both  $U$ and $W$. It is also clear that $\Gamma \subset \Gamma'$, and so the lemma is proved.
 \end{proof}

Notice that in the special case when  $m=0$, Lemma \ref{lemmaindep} reduces to:

\begin{corollary}\label{corollarylat}
 Let $L$ be a lattice, and $F$ be a finite set in $\R^n$. Then
there is another lattice $L'$ and  a finite set $F'$, such that
$L+F \subset L'+F'$, $L'\cap \mathbb Z[F'] =\{0\}$, $L \subset L'.$
\end{corollary}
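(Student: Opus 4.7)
The plan is to mimic the construction from Lemma \ref{lemmaindep} in this degenerate setting, where the auxiliary space $\R^m$ collapses to a point so the whole argument takes place in $\R^n$. Since the corollary is the $m=0$ instance of the lemma, one option is simply to invoke the lemma, observing that the model set $\M(\R^n \times \{0\}, L, \{0\})$ is just the lattice $L$ itself. However, it is cleaner to redo the construction directly, which I will sketch below.

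Let $F = \{\theta_1, \ldots, \theta_s\}$, and let $V := \Q[F]$, a finite-dimensional $\Q$-vector space. Inside $V$, set $U := V \cap \Q[L]$ (the part of $V$ that lies in the rational span of the lattice), and choose any $\Q$-linear complement $W$ with $V = U \oplus W$. Decompose each $\theta_j$ uniquely as $\theta_j = u_j + w_j$ with $u_j \in U$ and $w_j \in W$. Since $u_j \in \Q[L]$ for every $j$, one can find a single positive integer $q$ and lattice vectors $\lambda_1, \ldots, \lambda_s \in L$ such that $u_j = \lambda_j/q$ for all $j$.

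Now define
\[
L' := \tfrac{1}{q} L, \qquad F' := \{w_1, \ldots, w_s\}.
\]
Then $L'$ is a lattice in $\R^n$ containing $L$, and $F'$ is a finite set. For the inclusion $L + F \subset L' + F'$, given $\lambda + \theta_j$ with $\lambda \in L$, write $\lambda + \theta_j = (\lambda + \lambda_j/q) + w_j$; the first summand belongs to $L'$, the second to $F'$. It remains to check $L' \cap \Z[F'] = \{0\}$. Any such element lies in $\Z[F'] \subset W \subset V$, and it also lies in $L' = (1/q)L \subset \Q[L]$, hence in $V \cap \Q[L] = U$. Therefore it sits in $U \cap W = \{0\}$.

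The argument is essentially pure linear algebra, and there is no serious obstacle. The only conceptual point that deserves care is the choice of $U$: it must capture \emph{all} of $V$ that can be absorbed into a dilated lattice, which is exactly $V \cap \Q[L]$, so that the residual directions $W$ are genuinely independent from $\Q[L]$. This independence is what forces $L' \cap \Z[F'] = \{0\}$ in the end.
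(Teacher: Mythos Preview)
Your proposal is correct and is precisely the $m=0$ specialization of Lemma~\ref{lemmaindep}, which is exactly how the paper obtains the corollary. The direct construction you spell out is the same as in the lemma's proof with $p_1$ the identity and $p_2$ trivial, so there is no substantive difference in approach.
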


\section{Proof of Theorems \ref{thm1} and \ref{thm2}}

\subsection{}
               We will use the following notation: for
                 $h\in \Lam-\Lam$, denote
\[
                     \Lam_h:=\Lambda\cap(\Lambda-h)=\{\lam \in \Lam : \lam+h \in \Lam\}.
\]
Clearly  $\Lam_h$ is  a non-empty subset of $\Lam$.

Let $\mu$ be a measure in $\R^n$ satisfying \eqref{mes1} and \eqref{mes2}.
For each $h \in \Lam-\Lam$ we introduce a new measure
\begin{equation}
\label{eqmuh}
\mu_h:=\sum_{\lambda\in\Lambda_h}\mu(\lambda)\,\overline{\mu(\lambda+h)}\,\delta_\lambda.
\end{equation}
Clearly it is a non-zero measure with   $\supp(\mu_h) = \Lam_h$ and with bounded atoms
(by Lemma \ref{lemma15}), so it is a temperate distribution.

\begin{lemma}\label{lem1}
Let $a := d(S)>0$. Then we have 
$\spec(\mu_h) \cap B_a \subset \{0\}$, that is,
the punctured ball $B_a \setminus \{0\}$ is free from
the spectrum of the measure  $\mu_h$.
\end{lemma}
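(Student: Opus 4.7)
The intuition is that $\mu_h$ is, formally, the product $\mu\cdot\overline{T_{-h}\mu}$ of two measures on uniformly discrete sets, so $\ft\mu_h$ ought to be the convolution $\ft\mu\ast\widehat{\overline{T_{-h}\mu}}$ of two measures supported on $S$ and $-S$ respectively. The support of such a convolution lies in $S-S$, which meets $B_a$ only at the origin, since $d(S)\ge a$ forces any two distinct points of $S$ to be at distance at least $a$ apart. The task is to turn this formal identity into a rigorous one despite neither factor having compact support.

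The plan is to pass to $\R^{2n}$ via the tempered distribution $\mu\otimes\bar\mu$, whose Fourier transform is a measure on $S\times(-S)$. Fix a Schwartz function $\phi$ with $\supp\phi\subset B_a\setminus\{0\}$; it suffices to show
\[
\langle\ft\mu_h,\phi\rangle=\langle\mu_h,\ft\phi\rangle=\sum_{\lam\in\Lam_h}\mu(\lam)\,\overline{\mu(\lam+h)}\,\ft\phi(\lam)=0.
\]
Choose a Schwartz bump $\rho$ with $\rho(0)=1$, $|\rho|\le 1$, and $\supp\rho\subset B_r$ for some $r<d(\Lam)/2$, and consider the Schwartz function $\Phi(x,y):=\rho(y-x-h)\,\ft\phi(x)$ on $\R^{2n}$. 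A direct change of variable gives $\check\Phi(\xi,\eta)=e^{2\pi i\langle\eta,h\rangle}\ft\rho(-\eta)\phi(\xi+\eta)$, so Parseval on $\R^{2n}$ yields
\[
\langle\mu\otimes\bar\mu,\Phi\rangle=\sum_{s,s'\in S}\ft\mu(s)\,\overline{\ft\mu(s')}\,e^{-2\pi i\langle s',h\rangle}\ft\rho(s')\,\phi(s-s').
\]
Every term contains the factor $\phi(s-s')$, which vanishes for all $s,s'\in S$: if $s=s'$ then $s-s'=0\notin\supp\phi$, and if $s\ne s'$ then $|s-s'|\ge a$, again outside $\supp\phi$. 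Hence the Fourier-side sum is $0$, and translating back to physical space reads
\[
\sum_{\lam,\lam'\in\Lam}\mu(\lam)\,\overline{\mu(\lam')}\,\rho(\lam'-\lam-h)\,\ft\phi(\lam)=0.
\]

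It remains to extract the main sum from the noise as $r\to 0$. For each $\lam\in\Lam$ the condition $r<d(\Lam)/2$ allows at most one $\lam'\in\Lam$ within distance $r$ of $\lam+h$: for $\lam\in\Lam_h$ this is $\lam'=\lam+h$ with $\rho(0)=1$, producing the wanted contribution $\mu(\lam)\overline{\mu(\lam+h)}\ft\phi(\lam)$, while for $\lam\notin\Lam_h$ any noise term is bounded in absolute value by $C^2|\ft\phi(\lam)|$ with $C:=\sup_\lam|\mu(\lam)|<\infty$ by \lemref{lemma15}. Since $\Lam$ is closed, each $\lam\notin\Lam_h$ has $\operatorname{dist}(\lam+h,\Lam)>0$, so the set of noise-contributing $\lam$'s shrinks pointwise to the empty set as $r\to 0$; together with the summability of $\sum_{\lam\in\Lam}|\ft\phi(\lam)|$ (coming from $\ft\phi$ being Schwartz and $\Lam$ uniformly discrete), dominated convergence sends the noise to zero. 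Because the main sum is independent of $r$, it must itself be zero. The principal obstacle here is precisely the rigorous treatment of the formal convolution in the absence of compact support; the $\R^{2n}$ tensor product trick handles it in one stroke, and the $r\to 0$ limit then eliminates the parasitic near-misses between $\Lam$ and $\Lam+h$.
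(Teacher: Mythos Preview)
Your proof is correct, but it packages the key idea differently from the paper's argument.

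The paper works entirely in $\R^n$: it fixes $u\in\R^n$ and considers the measure
\[
(\ft\varphi_\delta\ast\ft\mu)(t+u)\cdot\overline{\ft\mu(t)},
\]
which is a well-defined product of a smooth function with a discrete measure, supported on $S\cap(S-u+B_\delta)$; this set is empty whenever $u\notin(S-S)+B_\delta$. Taking the inverse Fourier transform in $t$ yields $\sum_{h\in\Lambda-\Lambda}(\varphi_\delta\cdot\mu_h)\,\ft{\;}\,(u)\,\delta_h$, so each coefficient $(\varphi_\delta\cdot\mu_h)\,\ft{\;}\,(u)$ vanishes for $u$ outside $(S-S)+B_\delta$, and letting $\delta\to0$ gives $\spec(\mu_h)\subset\overline{S-S}$.

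Your route passes instead to $\R^{2n}$ via the tensor product $\mu\otimes\bar\mu$, tests against the Schwartz function $\Phi(x,y)=\rho(y-x-h)\,\ft\phi(x)$, and uses the bump $\rho$ to pick out the slice $\lam'=\lam+h$. The paper's regularization $\varphi_\delta$ makes $\varphi_\delta\cdot\mu_h$ a finite measure so that the product/convolution in $\R^n$ is automatically legitimate, and it treats all $h\in\Lambda-\Lambda$ at once; the price is a slightly more involved distributional computation. Your $\R^{2n}$ device sidesteps the product-of-two-infinite-measures issue entirely (the tensor product is always well-defined), but requires the extra $r\to0$ dominated-convergence step to kill the parasitic near-diagonal terms $\lam'\approx\lam+h$ with $\lam\notin\Lambda_h$. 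Both limiting arguments are of comparable difficulty; they simply regularize on different sides of the problem.
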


\begin{proof}
We fix a Schwartz function $\varphi$ on $\R^n$, such that $\varphi(0)=1$, and whose spectrum is contained
in the open unit ball. Denote $\varphi_\delta(x) := \varphi(\delta x)$.

Let $u\in\mathbb R^n.$ Consider the measure
\begin{equation}\label{4}
(\hat \varphi_\delta\ast\hat\mu)(t+u)\cdot\overline{\hat\mu(t)}.
\end{equation}
It is a temperate distribution, supported by the set $S\cap(S-u+B_\delta).$ Hence, if
$$u\in U_\delta:=\mathbb R^n\setminus[(S-S)+B_\delta],$$
then the measure in \eqref{4} vanishes identically.

Now consider the Fourier transform of the measure \eqref{4}. It is the measure
\begin{align*}
&
[e^{2\pi i\langle u,x\rangle}\varphi_\delta(-x)\mu(-x)]\ast\overline{\mu(x)} = \sum_{\lambda\in\Lambda}\sum_{\lambda'\in \Lambda} e^{-2\pi i\langle u,\lambda\rangle}\varphi_\delta(\lambda)\mu(\lambda)\overline{\mu(\lambda')}\delta_{\lambda'-\lambda}\\
&=\sum_{h\in\Lambda-\Lambda}\left[\sum_{\lambda\in\Lambda_h}e^{-2\pi i\langle u,\lambda\rangle}\varphi_\delta(\lambda)\mu(\lambda)\overline{\mu(\lambda+h)}\right]\delta_h\\
&=\sum_{h\in\Lambda-\Lambda}(\varphi_\delta\cdot\mu_h) \,\ft{\,}\, (u)\cdot\delta_h.
\end{align*}
 It follows that
 for every $h\in\Lam-\Lam$ we have
 $$(\varphi_\delta \cdot \mu_h)\,\ft{\,}\, (u)=0,\quad u\in U_\delta.$$

The finite measure $\varphi_\delta\cdot\mu_h$ tends to $\mu_h$ (in the sense of temperate distributions) as $\delta\to 0$.
This implies that $\spec(\mu_h)$ is contained in the closure of
the set $S-S$,  which is disjoint from $B_a \setminus \{0\}$. The lemma is therefore proved.
\end{proof}

\begin{remark*}
If $\mu$ is a \emph{positive} measure, then so is $\mu_h$. Hence in this case Lemmas  \ref{lemma19} and \ref{lem1} imply that the distribution
                       $\ft\mu_h$ has an isolated atom at the origin.
\end{remark*}

\subsection{}  

\begin{lemma}\label{lem3}
Let $\Lam$ be a u.d.\ set in $\R^n$. Suppose there is $c=c(\Lam)>0$ such that
$D_\#(\Lambda_h) > c$ for every $h\in\Lambda-\Lambda$. Then $D^+(\Lam-\Lam)<\infty$.           
\end{lemma}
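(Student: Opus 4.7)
The plan is a double counting argument relating $\Lambda$, the set $\Lambda-\Lambda$, and the subsets $\Lambda_h$. Fix $x\in\mathbb R^n$ and $R_0>0$, and let $H$ be any \emph{finite} subfamily of $(\Lambda-\Lambda)\cap(x+B_{R_0})$. (We work with finite subfamilies because a priori we do not yet know that $\Lambda-\Lambda$ is locally finite.) The goal is to bound $\#H$ by a constant that grows only like $R_0^{\,n}$, with constants depending on $\Lambda$ but not on $x$ or $H$; this immediately gives $D^+(\Lambda-\Lambda)<\infty$.

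First, by hypothesis $D_\#(\Lambda_h)>c$ for every $h\in H$, and $H$ is finite, so we can choose $R$ large enough that
\[
\#(\Lambda_h\cap B_R)> c\,|B_R|\quad\text{for every }h\in H\text{ simultaneously.}
\]
Summing over $h\in H$ and exchanging the order of summation using the defining property $\Lambda_h=\{\lambda\in\Lambda:\lambda+h\in\Lambda\}$:
\[
\sum_{h\in H}\#(\Lambda_h\cap B_R)
=\sum_{\lambda\in \Lambda\cap B_R}\#\{h\in H:\lambda+h\in\Lambda\}
\le \sum_{\lambda\in \Lambda\cap B_R}\#\bigl(\Lambda\cap(\lambda+x+B_{R_0})\bigr).
\]
Here I used that $h\in x+B_{R_0}$ forces $\lambda+h\in \lambda+x+B_{R_0}$.

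Now I invoke the standard fact that for a u.d.\ set with separation $\delta:=d(\Lambda)>0$, the number of points in any ball of radius $R_0$ is at most $K_n(R_0+\delta)^n/\delta^n\le K'R_0^{\,n}$ (once $R_0\ge\delta$), by disjointness of balls of radius $\delta/2$ centered at the points. Applying this to each inner count, and to $\#(\Lambda\cap B_R)\le K'R^n$, the right hand side is bounded by $K'^{\,2}R_0^{\,n}R^n$. Combined with the lower bound $c\,|B_R|\cdot\#H\le\sum_{h\in H}\#(\Lambda_h\cap B_R)$ and $|B_R|=c_n R^n$, the factors of $R^n$ cancel and we obtain
\[
\#H\le \frac{K'^{\,2}}{c\,c_n}\,R_0^{\,n}.
\]
Since this bound is independent of the finite subfamily $H$, the set $(\Lambda-\Lambda)\cap(x+B_{R_0})$ is finite with the same bound, uniformly in $x$. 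Dividing by $|B_{R_0}|=c_nR_0^{\,n}$ and letting $R_0\to\infty$ yields $D^+(\Lambda-\Lambda)\le K'^{\,2}/(c\,c_n^{\,2})<\infty$, as required.

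The argument is essentially a counting identity plus uniform discreteness, so there is no real obstacle; the only point requiring care is that we have not yet established local finiteness of $\Lambda-\Lambda$, which forces us to pass through arbitrary finite subfamilies $H$ and obtain the bound uniformly before concluding that $(\Lambda-\Lambda)\cap(x+B_{R_0})$ itself is finite.
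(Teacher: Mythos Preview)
Your proof is correct, but the route differs from the paper's.

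The paper's argument is shorter and sharper: it fixes the radius at $\delta:=d(\Lambda)/2$ and observes that if $h_1,\dots,h_N$ are distinct elements of $(\Lambda-\Lambda)\cap(x+B_\delta)$ then the sets $\Lambda_{h_1},\dots,\Lambda_{h_N}$ are pairwise \emph{disjoint} subsets of $\Lambda$ (since $\lambda\in\Lambda_{h_i}\cap\Lambda_{h_j}$ would force $h_i-h_j=(\lambda+h_i)-(\lambda+h_j)\in(\Lambda-\Lambda)\cap B_{2\delta}=\{0\}$). Super-additivity of $D_\#$ then gives $D_\#(\Lambda)\ge cN$, so $N\le D_\#(\Lambda)/c$, bounding the number of points of $\Lambda-\Lambda$ in any ball of radius $\delta$. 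Your double-counting argument, by contrast, never uses disjointness: it works at an arbitrary scale $R_0$ and controls the overlap via the bound $\#\{h\in H:\lambda+h\in\Lambda\}\le\#(\Lambda\cap(\lambda+x+B_{R_0}))$. In effect, when $R_0\le d(\Lambda)/2$ your inner bound collapses to $1$ and you recover the paper's disjointness as a special case. The paper's approach is cleaner and gives a slightly better constant; your approach is a bit more flexible (it would survive, say, if $\Lambda$ were only known to be locally finite with a uniform count in balls rather than uniformly discrete), and your remark about passing through finite subfamilies $H$ to handle the a~priori possibility that $\Lambda-\Lambda$ is not locally finite is a nice point of rigor that the paper leaves implicit.
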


\begin{proof}
Let $x \in \R^n$. Suppose that $h_1,\dots,h_N$ are distinct vectors belonging to the set
$( \Lambda-\Lambda)\cap (x+B_\delta)$, where  $\delta := d(\Lam)/2 > 0$. 
If $\lam\in \Lam_{h_i} \cap \Lam_{h_j}$ $(i\neq j)$ then
\[
h_i - h_j = (\lam +h_i) - (\lam +h_j) \in ( \Lambda-\Lambda) \cap B_{2\delta} = \{0\},
\]
which is not possible. Hence $\Lambda_{h_1},\dots,\Lambda_{h_N}$ are pairwise disjoint subsets of $\Lam$.
Since the density $D_\#$ is  super-additive, it follows that
\[
D_\#(\Lambda) \ge \sum_{j=1}^{N} D_\#(\Lam_{h_j}) \ge c N.
\]
This shows that the set $\Lam-\Lam$ cannot have  more than $D_\#(\Lambda) /c$  elements in any ball of radius $\delta$,
thus  $D^+(\Lam-\Lam)<\infty$.
\end{proof}

\subsection{}

\begin{lemma}\label{lemma10} Let
$E$ be a bounded set in $\mathbb R^m$, and let $\xi$ be a vector in $E-E$ such that
$$|\xi|^2 > (\diam E)^2-\delta^2$$
for some $\delta>0$. Suppose that we are given two representations of $\xi$ as the difference of two elements from $E$:
\[
\xi=y_1-x_1=y_2-x_2, \qquad x_1,y_1,x_2,y_2\in E.
\]
Then $|x_1-x_2|<\delta$.
\end{lemma}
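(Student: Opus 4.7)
The plan is to use the parallelogram identity applied to the two ``cross differences'' obtained by swapping the roles of the two representations of $\xi$. Write $u := x_1 - x_2$; from $y_1 - x_1 = y_2 - x_2$ we get the identity $u = x_1 - x_2 = y_1 - y_2$, so the single vector $u$ describes simultaneously the displacement between the $x$'s and between the $y$'s. The goal is to show $|u|^2 < \delta^2$.

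Next, I would form the two cross vectors $y_1 - x_2$ and $y_2 - x_1$. Using $u = y_1 - y_2 = x_1 - x_2$, a direct computation gives
\[
y_1 - x_2 = (y_1 - x_1) + (x_1 - x_2) = \xi + u, \qquad y_2 - x_1 = (y_2 - x_2) - (x_1 - x_2) = \xi - u.
\]
Since $x_1, x_2, y_1, y_2$ all lie in $E$, each of these cross differences has norm at most $\diam E$, hence
\[
|\xi + u|^2 \le (\diam E)^2, \qquad |\xi - u|^2 \le (\diam E)^2.
\]

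Finally, I would invoke the parallelogram identity
\[
|\xi + u|^2 + |\xi - u|^2 = 2|\xi|^2 + 2|u|^2,
\]
which combined with the two bounds above yields $2|\xi|^2 + 2|u|^2 \le 2(\diam E)^2$, i.e.\ $|u|^2 \le (\diam E)^2 - |\xi|^2 < \delta^2$ by hypothesis. This is exactly the desired inequality $|x_1 - x_2| < \delta$.

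There is essentially no real obstacle here: the only ``trick'' is recognising that the two given representations of $\xi$ produce two diagonals $\xi\pm u$ of a parallelogram whose sides are controlled by $\diam E$, at which point the parallelogram law converts a bound on the diagonals plus a lower bound on $|\xi|$ into an upper bound on $|u|$.
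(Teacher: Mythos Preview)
Your proof is correct and is essentially the same as the paper's: the paper applies the parallelogram law in the one-line form
\[
|\xi|^2+|x_1-x_2|^2=\tfrac12\bigl(|y_1-x_2|^2+|y_2-x_1|^2\bigr)\le(\diam E)^2,
\]
which is exactly your identity $2|\xi|^2+2|u|^2=|\xi+u|^2+|\xi-u|^2$ with $u=x_1-x_2$ and $\xi\pm u$ identified as $y_1-x_2$ and $y_2-x_1$.
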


\begin{proof}
By the parallelogram law we have
$$|\xi|^2+|x_1-x_2|^2=\frac{1}{2}\,(|y_1-x_2|^2+|y_2-x_1|^2)\le(\diam E)^2,$$
so the claim follows.
\end{proof}

\subsection{}

\begin{lemma}\label{lem5}
Let  $\Lam$ be a Meyer set in $\R^n$.   Suppose there is $c=c(\Lam)>0$ such that
\begin{equation}
\label{eq6}
D^+(\Lam_h)>c
\end{equation}
 for every $h\in\Lambda-\Lambda$. Then $\Lam$ is contained in a finite union of translates of some lattice.
\end{lemma}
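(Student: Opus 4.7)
The plan is to apply Meyer's theorem to write $\Lam \subset M + F$ with $M = \M(\R^n \times \R^m, \Gam, \Omega)$ a model set and $F$ a finite set, and then to show that the internal window must degenerate to a single point. After invoking \lemref{lemmaindep}, we may assume $p_1(\Gam) \cap \Z[F] = \{0\}$, so that each $\lam \in \Lam$ has a \emph{unique} decomposition $\lam = p_1(\gam_\lam) + f_\lam$ with $\gam_\lam \in \Gam$, $p_2(\gam_\lam) \in \Omega$, and $f_\lam \in F$. Write $\alpha_\lam := p_2(\gam_\lam)$, $A := \{\alpha_\lam : \lam \in \Lam\} \subset \Omega$, and $D := \diam(\overline A)$.

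The structural observation is that for every $h \in \Lam - \Lam$, uniqueness of the decomposition applied to $h = (\lam + h) - \lam$ forces the pair $(\gam_{\lam+h} - \gam_\lam,\; f_{\lam+h} - f_\lam)$ to depend only on $h$, not on $\lam \in \Lam_h$. Consequently $\alpha_{\lam+h} - \alpha_\lam$ is constant on $\Lam_h$; call this common value $\xi_h$.

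Now suppose toward contradiction that $D > 0$. Given $\delta > 0$, choose $\lam_1, \lam_2 \in \Lam$ with $|\alpha_{\lam_2} - \alpha_{\lam_1}|^2 > D^2 - \delta^2$ and set $h_0 := \lam_2 - \lam_1$, so $\xi_{h_0} = \alpha_{\lam_2} - \alpha_{\lam_1}$. For each $\lam \in \Lam_{h_0}$, both $\alpha_\lam$ and $\alpha_{\lam + h_0}$ lie in $\overline A$ with difference $\xi_{h_0}$, and \lemref{lemma10} (applied with $E = \overline A$) yields $|\alpha_\lam - \alpha_{\lam_1}| < \delta$. Hence $\Lam_{h_0}$ is contained in the union, over $f \in F$, of translates $f + \M(\R^n \times \R^m, \Gam, \overline{B_\delta(\alpha_{\lam_1})})$. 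By \lemref{lemma21}, applicable since the ball has boundary of measure zero, each such model set has density $|B_\delta|/\det(\Gam)$, and so
\[
D^+(\Lam_{h_0}) \le \frac{\#F \cdot |B_\delta|}{\det(\Gam)}.
\]
For $\delta$ sufficiently small this is less than $c$, contradicting the hypothesis.

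Therefore $D = 0$, and $A = \{\alpha_0\}$ is a single point. Fixing any $\gam_0 \in \Gam$ with $p_2(\gam_0) = \alpha_0$ and setting $K := \Gam \cap (\R^n \times \{0\})$, we have $\gam_\lam \in \gam_0 + K$ for every $\lam \in \Lam$, whence $\Lam \subset p_1(\gam_0) + p_1(K) + F$. The subgroup $p_1(K)$ is discrete in $\R^n$, and since $\Lam$ is Delone it must span $\R^n$ over $\R$; hence $p_1(K)$ is a full-rank lattice $L$, and $\Lam$ lies in a finite union of translates of $L$, as required. The main obstacle I anticipate is the uniqueness step — it is what makes $\xi_h$ well-defined at all, and without it the density bound for $\Lam_{h_0}$ collapses — so the whole proof hinges on extracting independence via \lemref{lemmaindep} at the outset.
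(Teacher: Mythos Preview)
Your proof is correct and follows essentially the same route as the paper's: invoke Theorem~M and \lemref{lemmaindep} to get a unique decomposition, observe that $\gam_{\lam+h}-\gam_\lam$ depends only on $h$, use \lemref{lemma10} to squeeze the internal window, and bound $D^+(\Lam_h)$ via \lemref{lemma21}. The only cosmetic difference is the endgame: the paper infers $m=0$ directly (so that $M$ itself is a lattice), whereas you allow $m\ge 1$ but conclude that the internal image $A$ collapses to a point and then extract the lattice as $p_1(\Gam\cap(\R^n\times\{0\}))$, checking full rank from the relative density of $\Lam$---both arguments are equivalent.
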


\begin{proof}
(i)
By  Theorem M there exists a model set
$M= \M(\R^n\times  \R^m,\Gamma,\Omega)$ and  a finite set  $F$ such that $\Lambda\subset M+F$.
By Lemma \ref{lemmaindep} we may suppose that
\begin{equation}
\label{eqindp}
p_1(\Gamma)\cap \mathbb Z[F]=\{0\}.
\end{equation}
Thus each $\lambda \in \Lam$ admits a unique representation as
\begin{equation}
\label{equnqrep}
\lambda=p_1(\gamma(\lambda))+\theta(\lambda),\qquad \gamma(\lambda)\in\Gamma,\ p_2(\gamma(\lambda))\in\Omega,\ \theta(\lambda)\in F.
\end{equation}
The uniqueness follows from \eqref{eqindp} and the fact that the restriction of $p_1$ to $\Gamma $ is injective.

\medskip

(ii)
Let $h\in \Lam-\Lam$, and suppose that $\lam_1,\lam_2\in\Lam_h$.
Denote
$$\lam'_j := \lam_j +h, \quad j=1,2.$$
Then from \eqref{equnqrep} we have
$$h=\lam'_j-\lambda_j=p_1(\gam(\lam'_j)-\gam(\lam_j))+(\theta(\lam'_j)-\theta(\lam_j)), \quad j=1,2.$$
The condition \eqref{eqindp} implies that the representation of $h$ as the sum of an element from
$p_1(\Gamma)$ and an element from $F-F$ is unique.
Hence, we must have
$$p_1(\gam(\lam'_1)-\gam(\lam_1)) = p_1(\gam(\lam'_2)-\gam(\lam_2)).$$
Since the restriction of $p_1$ to $\Gamma $ is injective, this implies
$$\gam(\lam'_1)-\gam(\lam_1) = \gam(\lam'_2)-\gam(\lam_2).$$
We thus obtain the following: to each  $h\in \Lam-\Lam$ there corresponds an element $H(h) \in \Gam$ such that
\begin{equation}
\label{eqHh}
\gam(\lam+h)-\gam(\lam) = H(h), \quad \lam \in\Lam_h.
\end{equation}

\medskip

(iii)
Let $E:=\{p_2(\gamma(\lambda)):\lambda\in\Lambda\}$. Then
$E$ is a bounded set in $\mathbb R^m$, $E\subset \Omega$.  
Given $\delta>0,$ we may choose a vector $\xi \in E-E$  such that
$|\xi|^2 > (\diam E)^2-\delta^2$.
Observe that
$$E-E = \{p_2(H(h)): h\in\Lam-\Lambda\},$$
hence $\xi = p_2(H(h))$ for some $h\in\Lam-\Lambda$.
Let us fix such an $h$.

Now suppose that $\lambda_1,\lam_2 \in\Lambda_h$. Then by \eqref{eqHh} we have
\[
H(h) = \gam(\lam_j+h)-\gam(\lam_j) , \quad j=1,2.
\]
This yields  two representations of $\xi$ as the difference of two elements from $E$:
\[
\xi = p_2(H(h)) = p_2(\gam(\lam_j+h))-p_2(\gam(\lam_j)), \quad j=1,2.
\]
By Lemma \ref{lemma10}  we must therefore have
\[
|p_2(\gam(\lam_2)) - p_2(\gam(\lam_1))| < \delta.
\]
Hence, we conclude the following: denote
\begin{equation}
\label{eqEh}
E(h) :=\{p_2(\gamma(\lambda)):\lambda\in\Lambda_h\}.
\end{equation}
Then, given any $\delta > 0$ one can find  $h \in \Lam-\Lam$ such that $\diam(E(h)) < \delta$.

\medskip

(iv)
Let $h \in \Lam-\Lam$, and suppose  that $\diam(E(h)) < \delta$ for some $\delta > 0$.
We may find an open ball $\Omega'$ of radius $\delta$ such that $E(h) \subset \Omega'$.
Consider the model  set 
$$M'= \M(\R^n\times  \R^m,\Gamma,\Omega').$$
Then by \eqref{eqmod}, \eqref{equnqrep} and \eqref{eqEh} we have $\Lam_h \subset M'+F$.
Since the density $D^+$ is  sub-additive and invariant under translations, this implies
\[
D^+(\Lam_h ) \leq \# F \cdot D^+(M').
\]
Recall that $D^+(M') =  (\det \Gamma)^{-1} |\Omega'|$, according to \lemref{lemma21}. Hence
\[
D^+(\Lam_h ) \leq \# F \cdot \frac{c_m  \delta^m}{\det \Gamma},
\]
where $c_m$ denotes the volume of the unit ball in $\R^m$.

\medskip

(v)
It follows from (iii),(iv) that if  $m \geq 1$, then we may find elements  $h \in \Lam-\Lam$ with
$D^+(\Lam_h)$ arbitrarily small, in contradiction to \eqref{eq6}. Hence we must have $m=0$,
that is, $M$ must be a lattice. Thus $M+F$ is a finite union of translates of a lattice. Since
$\Lambda\subset M+F$, this concludes the proof.
\end{proof}

\subsection{}
Now we can finish the proof of Theorems \ref{thm1} and \ref{thm2}.

\begin{proof}[Proof of Theorems \ref{thm1} and \ref{thm2}]
For each $h\in\Lambda-\Lambda,$ let $\mu_h$ be the measure defined by \eqref{eqmuh}.
Then $\mu_h$ is a non-zero measure, $\supp(\mu_h)=\Lambda_h$, and
$\sup_\lam |\mu_h(\lam)| < \infty$ (by \lemref{lemma15}).

By \lemref{lem1} we have
\begin{equation}\label{1}
\spec(\mu_h) \cap B_a \subset \{0\},
\end{equation}
where $a=d(S)>0.$

In the one-dimensional case  $n=1$, observe that
condition \eqref{1} implies that $\hat\mu_h$ vanishes on the open interval $(0,a).$
So we may use Proposition \ref{prop1} which gives
\begin{equation}\label{2}
D_\#(\Lambda_h)\ge c,\quad h\in\Lambda-\Lambda,
\end{equation}
where $c>0$ is a constant which depends on $d(\Lambda)$ and $d(S).$

In the multi-dimensional case $n>1,$ we use the extra assumption that $\mu$ is a positive measure. It implies that $\mu_h$ is also positive, for every $h\in\Lambda-\Lambda.$ 
By \lemref{lemma19} we  therefore have $0\in\spec(\mu_h),$ so 
$\spec(\mu_h) \cap B_a = \{0\}$.
This allows us to use \lemref{lemma16}, which gives that $D^-(\Lambda_h)\ge c,$ where $c>0$ is a constant which now depends on $d(S)$ only. 
Since $D_\#(\Lambda_h)\ge D^-(\Lambda_h)$, we obtain \eqref{2} again.

With \eqref{2} established, we now proceed to apply \lemref{lem3} which gives
\begin{equation}\label{3}
D^+(\Lambda-\Lambda)<\infty.
\end{equation}

Also, using \lemref{lem6} with $Q=S$ and $\nu=\hat\mu$ gives that $\Lambda$ is
a relatively dense set. Hence $\Lambda$ is a Delone set
(see also \cite[Lemma 1]{cor2}).

 This together with \eqref{3} gives, by \lemref{lem4}, that $\Lambda$ is a Meyer set.
 
 Finally, we apply \lemref{lem5}. Since from  \eqref{2} we get
 $D^+(\Lambda_h)\ge  c$ for every $h\in\Lambda-\Lambda,$ the lemma gives
 that $\Lambda $ is contained in a finite union of translates of some lattice,
and this completes the proof.
 \end{proof}

\section{Proof of Theorem  \ref{thm3}}

\subsection{}

\begin{lemma}\label{lem24}
Let $\theta\in\mathbb R^n\setminus \mathbb Q^n.$ Then the set 
$$H(\theta):=\{m \in\mathbb Z^n: \langle\theta,m\rangle\in\mathbb Z\}$$
is contained in some $(n-1)$-dimensional hyperplane.
\end{lemma}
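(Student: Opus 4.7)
The plan is to argue by contrapositive: I will show that if $H(\theta)$ is \emph{not} contained in any $(n-1)$-dimensional hyperplane, then $\theta$ must be a rational vector.

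First I observe that $H(\theta)$ is an additive subgroup of $\mathbb Z^n$, and that being contained in an $(n-1)$-dimensional hyperplane through the origin is the same as failing to span $\mathbb R^n$ as a real vector space. So assume for contradiction that the real linear span of $H(\theta)$ is all of $\mathbb R^n$. Since $H(\theta)\subset\mathbb Z^n$, we may then extract from $H(\theta)$ a set of $n$ linearly independent integer vectors $m_1,\dots,m_n$.

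Now form the $n\times n$ integer matrix $M$ whose rows are $m_1,\dots,m_n$. By construction $M\theta=(\langle\theta,m_1\rangle,\dots,\langle\theta,m_n\rangle)^{T}\in\mathbb Z^n$, and $\det(M)\neq 0$ since the rows are independent. Then $M^{-1}$ has rational entries (by Cramer's rule, its entries are integers divided by $\det(M)$), and therefore
\[
\theta = M^{-1}(M\theta) \in \mathbb Q^n,
\]
contradicting the hypothesis $\theta\notin\mathbb Q^n$. This forces $H(\theta)$ to lie in a proper linear subspace of $\mathbb R^n$, hence in an $(n-1)$-dimensional hyperplane, as claimed.

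There is no real obstacle here; the argument is a one-step application of linear algebra over $\mathbb Z$ together with the rationality of the inverse of a nonsingular integer matrix. The only thing worth stating explicitly in the write-up is the identification between $H(\theta)$ failing to be contained in an $(n-1)$-hyperplane and $H(\theta)$ spanning $\mathbb R^n$ (which uses that $H(\theta)$ is a subgroup of $\mathbb Z^n$, so its $\mathbb Z$-span and its $\mathbb R$-span have the same dimension).
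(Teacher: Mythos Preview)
Your proof is correct and is essentially the same contrapositive linear-algebra argument as the paper's: both show that if $H(\theta)$ were full-dimensional then $\theta$ would be rational. The only cosmetic difference is packaging---the paper embeds $H(\theta)$ in the $\mathbb{Q}$-vector space $V(\theta)=\{x\in\mathbb{Q}^n:\langle\theta,x\rangle\in\mathbb{Q}\}$ and observes that $V(\theta)$ cannot contain every standard basis vector $e_i$ (else $\theta\in\mathbb{Q}^n$), whereas you extract an arbitrary integer basis from $H(\theta)$ and invert the resulting matrix via Cramer's rule.
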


\begin{proof}
Define
$V(\theta):=\{x\in\mathbb Q^n:\langle \theta,x\rangle\in\mathbb Q\}.$
It is a linear subspace of $\mathbb Q^n$ over the rationals. Since $\theta\notin\mathbb Q^n,$ this subspace cannot contain all the standard basis vectors $e_1,\dots,e_n$. Hence $V(\theta)$ is a proper subspace of $\Q^n$, and so it is necessarily contained in some $(n-1)$-dimensional hyperplane. But
$H(\theta)\subset V(\theta)$, so this proves the claim.
\end{proof}

Since the union of a finite number of hyperplanes cannot cover $\mathbb Z^n$, it follows that:

\begin{corollary}\label{corollary25}
Let $\theta_1,\dots,\theta_s\in\mathbb R^n\setminus \mathbb Q^n.$ Then there is $m\in\mathbb Z^n$ such that
$$\langle\theta_j,m\rangle\notin\mathbb Z,\quad 1\le j\le s.$$
\end{corollary}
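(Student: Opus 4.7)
The plan is essentially to string together \lemref{lem24} with a standard ``finite union of hyperplanes'' argument, as already indicated by the text preceding the statement.

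First, for each $j \in \{1,\dots,s\}$ I apply \lemref{lem24} to $\theta_j$ to obtain an $(n-1)$-dimensional hyperplane $V_j \subset \R^n$ (one may in fact take it defined over $\Q$) with $H(\theta_j) \subset V_j$. The conclusion of the corollary is then equivalent to the assertion that
\[
\Z^n \not\subset \bigcup_{j=1}^s V_j,
\]
because $m \in H(\theta_j)$ means exactly $\langle \theta_j, m\rangle \in \Z$.

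The only substantive step, then, is to check that a finite union of proper (affine) hyperplanes in $\R^n$ cannot contain $\Z^n$. This is a standard fact. I would prove it by induction on $n$: for $n=1$ any hyperplane is a single point, so $s$ hyperplanes contain at most $s$ integers, while $\Z$ is infinite. For the inductive step, write each $V_j$ as the zero set of a nonzero affine-linear form $\ell_j$ with rational coefficients (clearing denominators). The product $P := \prod_j \ell_j$ is a nonzero polynomial on $\R^n$. Fixing the last coordinate at any integer $k$ gives a polynomial $P_k$ in $n-1$ variables; since $P \not\equiv 0$, there is some $k\in\Z$ with $P_k \not\equiv 0$, and by induction (applied to the zero sets of the factors of $P_k$ viewed as hyperplanes in $\R^{n-1}$) we can find $m' \in \Z^{n-1}$ with $P_k(m') \neq 0$; then $m = (m', k) \in \Z^n$ lies in none of the $V_j$.

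I do not expect any real obstacle; the content of the corollary lies entirely in \lemref{lem24}, and this final corollary is just the combinatorial fact that $\Z^n$ is not covered by finitely many hyperplanes. The only thing to be careful about is not to confuse the statement with the (false over finite fields) general principle about vector spaces — here the argument really does use that $\Z$ is infinite, via the polynomial/induction argument above.
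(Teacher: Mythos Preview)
Your approach is exactly the paper's: apply \lemref{lem24} to each $\theta_j$ and then use that a finite union of hyperplanes cannot cover $\Z^n$. The paper simply asserts this covering fact in one sentence, whereas you supply a full inductive proof via the product polynomial $P=\prod_j \ell_j$; your argument is correct and is a standard way to justify the claim.
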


\subsection{}

\begin{proof}[Proof of Theorem \ref{thm3}]
We suppose that $\mu$ is a measure in $\mathbb R^n$ $(n\ge1)$
satisfying \eqref{mes1} and \eqref{mes2}, and that the support of $\mu$ is contained in a finite union of
translates of a lattice $L$.

Using Corollary \ref{corollarylat} we can find a larger  lattice $L' \supset L$ and a finite set $F'$
such that $L'\cap \mathbb Z[F']=\{0\}$, and the support of $\mu$ is contained in $L' + F'$.
We will show that $\mu$ can be represented in the form \eqref{mes3} with the lattice $L'$.
The desired representation with the original lattice  can  be obtained
by covering $L'$ with a finite number of translates of $L$.

It will be enough, by applying a linear transformation, to consider the case $L' = \mathbb Z^n.$

Denote by $\theta_1,\dots,\theta_s$ the elements of $F'$. For each $j=1,\dots,s$ define a measure
$$\mu_j:=\sum_{k\in\mathbb Z^n}\mu(k+\theta_j)\,\delta_k.$$
It is a temperate distribution (by Lemma \ref{lemma15})  supported by $\mathbb Z^n,$ and we  have
\begin{equation}\label{5}
\mu(x)=\sum_{j=1}^s\mu_j(x-\theta_j).
\end{equation}

The Fourier transform $\hat\mu_j$ is a temperate distribution on $\mathbb R^n$ which is $\mathbb Z^n$-periodic.
Define a distribution
\begin{equation}\label{eq5p}
\alpha_j(t):=e^{-2\pi i\langle\theta_j,t \rangle} \, \hat \mu_j(t).
\end{equation}
From \eqref{5}, \eqref{eq5p} and the periodicity of $\hat\mu_j$ it follows that
\begin{equation}\label{6}
\hat \mu(t-k)=\sum_{j=1}^s e^{2\pi i\langle\theta_j,k\rangle}\alpha_j(t)
\end{equation}
for each $k\in\mathbb Z^n$.

Since $\mathbb Z^n\cap \mathbb Z[\theta_1,\dots,\theta_s]=\{0\},$ we have
$$\theta_j-\theta_\ell\notin\mathbb Q^n\qquad (j\ne\ell).$$
Using \corref{corollary25}, we may therefore choose a vector $m\in\mathbb Z^n$ such that
\begin{equation}\label{7}
\langle\theta_j-\theta_\ell,m\rangle\notin\mathbb Z\qquad (j\ne\ell).
\end{equation}
Applying \eqref{6} with $k=p m$ $(p=0,1,2,\dots,s-1)$ yields a system of $s$ linear equations,
with a Vandermonde determinant that does not vanish
due to \eqref{7}. Hence this linear system may be inverted, and we obtain that
$$\alpha_j(t)=\sum_{p=0}^{s-1} c_{jp}\,\hat\mu(t-pm)$$
for appropriate coefficients $\{c_{jp}\}.$

But now using \eqref{eq5p} this implies that the distribution $\hat\mu_j$ is  a measure, supported by the closed, discrete set
$S+\{0,m,2m,\dots,(s-1)m\}$.
On the other hand, the measure $\hat\mu_j$ is $\mathbb Z^n$-periodic. Hence it must be of the form
$$\hat\mu_j= \nu_j \ast \sum_{k\in\mathbb Z^n}\delta_{k},$$
where $\nu_j$ is a measure which is a finite sum of point masses.
It follows that
$$\mu_j(x)=P_j(x)\sum_{k\in\mathbb Z^n}\delta_k$$
where  $P_j$ is a trigonometric polynomial, $P_j(x) = \ft \nu_j(-x)$. By \eqref{5} this completes the proof of Theorem \ref{thm3}.
\end{proof}

\section{Remarks}

1. Theorems \ref{thm1} and \ref{thm2} give an affirmative answer to Problem 4.1(a) in \cite[p.\ 79]{lag2}.
              The Problem 4.1(b) from that paper, asking whether
             one can remove the uniformity requirement
              for discrete sets $\Lam$ and $S$, remains open.
              For signed (not positive) measures, 
one may expect a counter-example 
          due to the results  in \cite{gui}.

We also leave open the problem whether \thmref{thm2} holds  for non-positive measures.
In \cite{lo} we proved this under the additional assumption that $S-S$ is a u.d.\ set.

            2. It is well-known that if one requires from $S$ in Theorems \ref{thm1} and \ref{thm2}
               to be  just a countable (non-discrete) set, then
                the result fails.  As an example one may take the model set defined by \eqref{eqmod} (with $m \geq 1$).
It is a u.d.\ set, which supports a positive measure $\mu$ whose Fourier
transform is a sum of point masses (see  \cite{mey2}), 
but  is not  contained in a finite union of translates of a lattice.

3.   It is likely that our proofs can be extended to the more general context of locally compact abelian groups. 
We do not attempt to work out the details in this paper.
            
            4.             Sometimes different approaches to mathematical models
              of quasicrystals are considered. In particular, inspired 
               by the Fibonacci sequence,
               one may look at the ``block complexity'' of a u.d.\ sequence 
               in $\R$, characterized by the number of distinct
                 blocks of  given length occurring in the sequence,
               see \cite{mey3}, \cite{bom}  and the references therein.
               It seems to be interesting to investigate the spectral
               properties of measures supported by sequences with
              ``low complexity''.


\end{document}